\documentclass [11pt,a4paper]{amsart}
\usepackage[utf8]{inputenc}
\usepackage{amsfonts,amsmath,amsthm}
\usepackage{amssymb,latexsym}
\usepackage{mathtools} 

\usepackage{latexsym}
\usepackage{mathrsfs}
\usepackage[hypcap]{caption}

\usepackage{enumitem}

%
\newtheorem{theorem}{Theorem}[section]
\newtheorem{lemma}[theorem]{Lemma}
\newtheorem{corollary}[theorem]{Corollary}
\newtheorem{proposition}[theorem]{Proposition}
\newtheorem*{theorem*}{Theorem}

\theoremstyle{definition}

\newtheorem{definition}[theorem]{Definition}
\newtheorem{remark}[theorem]{Remark}

\numberwithin{equation}{section}


\usepackage{hyperref}
\hypersetup{colorlinks,citecolor=blue,linkcolor=blue}


\newcommand\vol{\mathrm{vol}}
\newcommand\area{\mathrm{area}}
\newcommand\tr{\mathrm{tr}}
\newcommand\sys{\mathrm{sys}}

\DeclareMathOperator{\N}{N}
\DeclareMathOperator{\GL}{GL}

\DeclareMathOperator{\im}{Im}

\DeclareMathOperator{\Spin}{Spin}

\DeclareMathOperator{\PSp}{PSp}

\DeclareMathOperator{\Isom}{Isom}

\newcommand{\V}{\mathcal{V}}
\newcommand{\Vf}{\V_{\mathrm{f}}}

\newcommand{\Vi}{\V_\infty}

\usepackage{amssymb}
\usepackage{amsthm}
\usepackage{amscd}

\usepackage{graphicx}%
\usepackage{fancyhdr}

\def\Q{\mathbb{Q}}

\def\H{\mathbb{H}}
\def\O{\mathcal{O}}
\def\o{\mathfrak{o}}
\def\p{\mathfrak{p}}

\def\RR{\mathbb{R}}

\newcommand\C{\mathbb{C}}

\newcommand{\HH}{\mathbf{H}_\H}

\DeclareMathOperator{\Sp}{Sp}
\DeclareMathOperator{\G}{\mathbf{G}}
\newcommand{\U}{\mathbf{U}}
\DeclareMathOperator{\R}{Re}



\usepackage{tcolorbox}

\begin{document}

\title[Systole of quaternionic hyperbolic manifolds]
        {On the systole growth in congruence quaternionic hyperbolic manifolds}
\author{Vincent Emery}
\author{Inkang Kim}
\author{Plinio G. P. Murillo}

\address{Vincent Emery: Universitat Bern, Mathematisches Institut, Sidlerstrasse 5, CH-3012 Bern, Switzerland}
\email{vincent.emery@math.ch}

\address{Inkang Kim: School of Mathematics, KIAS, Heogiro 85, Dongdaemun-gu, Seoul, 02455, Republic of Korea}
\email{inkang@kias.re.kr}

\address{Plinio Murillo: School of Mathematics, KIAS, Heogiro 85, Dongdaemun-gu, Seoul, 02455, Republic of Korea}
\email{plinioguillel@gmail.com}

\thanks{I. Kim gratefully acknowledges the partial support of Grant NRF-2019R1A2C1083865. 
V. Emery supported by SNSF project no PP00P2\_183716}

\keywords{Systole, quaternionic hyperbolic space, congruence covers, arithmetic lattices}
\subjclass[2010]{22E40, 51M25 (primary); 11E57, 20G30, 51M10 (secondary)}


\begin{abstract}
We provide an explicit lower bound for the systole in principal congruence covers
of compact quaternionic hyperbolic manifolds.  
We also prove the optimality of this lower bound. 
\end{abstract}
\maketitle

\section{Introduction}

One the most important quantity associated with a Riemannian manifold $M$ is the shortest geodesic
length, which is called the \textit{systole} of $M$. We will denote it by $\sys_1(M)$.
Buser and Sarnak \cite{BS} constructed examples of hyperbolic surfaces $S$ whose systole grows
logarithmically with respect to the area: $$\sys_{1}(S)\geq \frac{4}{3}\log (\area(S))-c,$$ where
$c$ is a constant independent of $S$. Indeed these surfaces are congruence coverings of an
arithmetic hyperbolic surface. In 1996, Gromov \cite[Sec. 3.C.6]{Gr} showed that for any regular
congruence covering $M_I$ of a compact arithmetic locally symmetric space $M$, there exists a
constant $C>0$ so that $$ \sys_{1}(M_I)\geq C \log(\vol(M_I))- d,$$ where $d$ is independent of
$M_I$. This method, however, does not provide an explicit value for the constant $C$. The knowledge
of a precise value for this constant  gives us not only geometrical information on the locally
symmetric space, but it has also proven  useful in applications to other contexts, see for instance
the discussion in \cite[Prop. 5.3]{Bel13}, and \cite[Sec. IV]{GL14}.

The optimal value of the constant $C$ mainly depends on the Lie group type of the associated isometry
group, and there are several cases where an explicit $C$ are calculated. In 2007, Katz, Schaps and
Vishne \cite{KSV07} generalized Buser and Sarnak's result to any compact arithmetic hyperbolic
surface. They also proved that for compact arithmetic hyperbolic 3-manifolds the constant
$C=\frac{2}{3}$ works. For real arithmetic hyperbolic $n$-manifold of the first type, these results
were generalized by Murillo in \cite{Murillo}, where he proved that the constant is equal to
$\frac{8}{n(n+1)}$. In an appendix to this article, D\'oria and Murillo proved that this is the best
possible constant in this case. A similar result for Hilbert modular varieties was obtained in
\cite{Murillo17}.

Recently, Lapan, Linowitz and Meyer obtained a value for the constant $C$ for a large class of
arithmetic locally symmetric spaces, including real, complex and quaternionic hyperbolic orbifolds
\cite{LLM17}. However, the values of the constants obtained in \cite{LLM17} are not optimal, as 
the comparison with the results mentioned above shows. This 
paper is dedicated to improving the constant for quaternionic hyperbolic spaces. The main result is
the following.

\begin{theorem}\label{main}
Let $M=\Gamma\backslash \HH^n$ be a compact quaternionic hyperbolic orbifold, defined over the
number field $k$.  There exists a finite set $S$ of prime ideals
of $\O_k$ such that the principal congruence subgroup
$\Gamma_I$ associated with any ideal $I$ prime to $S$ satisfies 
\begin{equation*}
    \sys_{1}(M_{I})\geq\frac{4}{(n+1)(2n+3)}\log\big(\vol(M_{I})\big)-d,
\end{equation*}
where $M_I=\Gamma_I\backslash\HH^n$ and $d$ is a constant independent of $I$.
\end{theorem}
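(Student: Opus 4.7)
The plan is to adapt the Buser--Sarnak strategy to the quaternionic hyperbolic setting. Let $\gamma\in\Gamma_I\setminus\{1\}$ represent a shortest closed geodesic in $M_I$, so that its translation length equals $\sys_1(M_I)$. Realizing $\Gamma$ as the integer points of an algebraic $k$-group $\mathbf{G}$ with $\mathbf{G}(k_{v_0})\simeq\Sp(n,1)$ and $\mathbf{G}(k_v)$ compact at the other archimedean places, one may describe $\mathbf{G}$ as the unitary group of a Hermitian form $J$ over a quaternion $k$-algebra $D$ (necessarily ramified at all real places), and embed $\Gamma$ into $\GL_{n+1}(\O_D)$ via the standard representation on $D^{n+1}$. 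The condition $\gamma\in\Gamma_I$ then means that $\beta:=\gamma-I$ has all entries in $I\O_D$. The target is
\begin{equation*}
    \ell(\gamma)\geq 4\log\N(I)-d',
\end{equation*}
which, combined with the standard estimate $[\Gamma:\Gamma_I]\leq C\,\N(I)^{\dim\mathbf{G}}$ for $I$ prime to a finite set $S$ of exceptional primes and the identity $\dim\mathbf{G}=(n+1)(2n+3)$, yields the theorem.

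The key new ingredient relative to the real hyperbolic case is the Hermitian relation $\gamma^{\ast}J\gamma=J$, which upon substitution of $\gamma=I+\beta$ gives
\begin{equation*}
    \beta^{\ast}J+J\beta=-\beta^{\ast}J\beta.
\end{equation*}
The right-hand side has entries in $I^2\O_D$, so the left-hand side does as well. Moreover this matrix is nonzero whenever $\gamma\neq 1$: indeed $\beta^{\ast}J+J\beta=0$ would force $\beta^2=0$ and hence render $\gamma$ unipotent, which the cocompactness of $\Gamma$ rules out. Pick a nonzero entry $q\in I^2\O_D$; its reduced norm $\mathrm{nrd}(q)$ is a nonzero element of $I^4\cap\O_k$. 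The product formula on $k$, together with a uniform bound on $|\mathrm{nrd}(q)|_v$ at the compact archimedean places $v\neq v_0$, then yields $|\mathrm{nrd}(q)|_{v_0}\gtrsim\N(I)^4$. Since the reduced norm coincides with the squared Euclidean norm on the Hamilton quaternions $D_{v_0}$, this translates to $|q|_{v_0}\gtrsim\N(I)^2$.

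On the other hand, in the standard representation a loxodromic $\gamma$ of translation length $\ell$ has operator norm $\|\gamma\|_{v_0}=e^{\ell/2}$ at $v_0$, as can be verified from the explicit calculation for $\Sp(1,1)$ acting as a dilation in the upper half-space model of $\HH^1$. Consequently
\begin{equation*}
    |q|_{v_0}\leq\|\beta^{\ast}J+J\beta\|_{v_0}\leq 2\|J\|_{v_0}\|\beta\|_{v_0}\lesssim e^{\ell(\gamma)/2},
\end{equation*}
and comparing with the arithmetic lower bound gives $\ell(\gamma)\geq 4\log\N(I)-O(1)$, as desired.

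The main obstacle I anticipate is the bookkeeping around the finite set $S$ and the integral model of $\mathbf{G}$. One must pick $S$ so that, for $I$ prime to $S$, the reduction $\mathbf{G}(\O_k)\to\mathbf{G}(\O_k/I)$ behaves as expected (giving the index estimate with the sharp exponent $\dim\mathbf{G}$), the standard representation is $\O_{k,S}$-integral, and the no-unipotents argument applies. The precise factor $\|\gamma\|_{v_0}=e^{\ell/2}$ further depends on the chosen normalization of the metric on $\HH^n$; verifying it is straightforward but reduces to the rank-one computation in $\Sp(1,1)$ and requires fixing conventions consistently.
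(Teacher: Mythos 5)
Your strategy tracks the paper's at a high level (use the Hermitian relation $C^*JC=J$ to show that a trace-like quantity of $\gamma-I$ lies in $I^2\mathcal{O}_D$, bound it at the nontrivial archimedean places, and conclude $\ell(\gamma)\geq 4\log\N(I)-O(1)$, then feed this into the index bound $[\Gamma:\Gamma_I]\leq \N(I)^{\dim\G}$). But there is a genuine gap in the step connecting the arithmetic lower bound to the translation length, namely the claim
$$\lvert q\rvert_{v_0}\leq\lVert\beta^*J+J\beta\rVert_{v_0}\leq 2\lVert J\rVert\,\lVert\beta\rVert_{v_0}\lesssim e^{\ell(\gamma)/2}.$$
The operator norm of a loxodromic $\gamma\in\Sp(n,1)$ in the standard representation is \emph{not} $e^{\ell(\gamma)/2}$; it equals $e^{d(o,\gamma o)/2}$ where $o$ is the basepoint fixed by the maximal compact $\Sp(n)\times\Sp(1)$, and this can be arbitrarily large relative to $e^{\ell(\gamma)/2}$ when the axis of $\gamma$ is far from $o$. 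The operator norm is not a conjugation invariant, while $\ell(\gamma)$ is. Thus the inequality you need ($\lVert\gamma\rVert_{v_0}\lesssim e^{\ell/2}$) is false in general; what is true is the reverse inequality. Your explicit check in $\Sp(1,1)$ only verifies the case where the axis passes through $o$. So the crucial upper bound on $\lvert q\rvert_{v_0}$ in terms of $\ell(\gamma)$ does not follow from your argument.

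The paper's fix is to work with a conjugation-invariant quantity: $\R(\tr(\gamma))$. It shows (Proposition~\ref{keytheorem} and Corollary~\ref{lenghttrace}) that the eigenvalues of a loxodromic $A\in\Sp(n,1)$ consist of $t,\overline t,t^{-1},\overline t^{-1}$ with $|t|=e^{\ell_A/2}$ plus eigenvalues of norm $1$, giving $\lvert\R(\tr(A))\rvert\leq(n+1)e^{\ell_A/2}$. Only the diagonal of $\beta^*J+J\beta$ (which captures $\sum_i\R(y_i)=\R(\tr\gamma)-(n+1)$) is used, not an arbitrary entry. Related to this: your nonvanishing argument ($\beta^*J+J\beta\neq 0$ via no unipotents) is too weak for the trace route, since the nonzero entries might all be off-diagonal; the paper instead proves directly that $\lvert\R(\tr(C))\rvert\neq n+1$ for $C\neq\pm I$ using unitarity of $C^\sigma$ at a nontrivial archimedean place $\sigma$ (Lemma~\ref{trazanon}). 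You would need to replace your operator-norm step by this trace--eigenvalue argument to close the gap.
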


The proof of Theorem \ref{main} appears at the end of Section \ref{sec:lowerbound}, after the needed preparation. 
Let us point out that the definition of $\Gamma_I$ depends on the choice of an embedding; however
the result of Theorem \ref{main} is not affected by this choice.  For concrete $\Gamma$ the set of
primes $S$ can be made explicit; see Remark~\ref{rmk:bad-places}.

Note that when $n=1$, $\textbf{H}_{\mathbb{H}}^{1}$ is isometric
to the four dimensional real hyperbolic space, and the constant $\frac{2}{5}$ agrees with that of
\cite{Murillo}. In Section~\ref{optimal} we generalize the argument of  D\'oria and Murillo to prove that the constant
$\frac{4}{(n+1)(2n+3)}$ is optimal; see Theorem \ref{thm:optimal}. 


\section{The quaternionic hyperbolic space}
\label{Quaternionic hyperbolic $n$-space}

\subsection{Hamiltonian quaternions}%
\label{sub:quaternions}

Let $\mathbb{H}$ be the $\mathbb{R}$-algebra of the Hamilton quaternions
$q=q_{0}+q_{1}i+q_{2}j+q_{3}ij$ with $q_{i}$ being real numbers, and $i^2=-1$, $j^2=-1$, $ij=-ji$.
Any quaternion $q$ has a {\em conjugate} $\overline{q}=q_{0}-q_{1}i-q_{2}j-q_{3}ij$, and the {\em norm} of $q$ is
given by $|q|=\sqrt{ q \overline{q}}$. The {\em real part} of $q$ is $\R(q)=q_{0}$,
and its {\em imaginary part} is $\im(q)=q_{1}i+q_{2}j+q_{3}ij$. We will consider the field $\mathbb{C}$ of
complex numbers as the subring of $\H$ consisting of the quaternions of the form $q_{0}+q_{1}i$. We say
that two quaternions $p, q$ are \emph{similar} if there exist a non-zero $r\in\H$ such that
$q=rpr^{-1}.$

\begin{lemma}\label{similarity}
Any quaternion $q$ is similar to a complex number with the same norm and the same real part. 
\end{lemma}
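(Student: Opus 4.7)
The plan is to first observe that similarity automatically preserves both the real part and the norm, so the content of the statement is really that every quaternion is conjugate to some complex number. Indeed, using $\R(ab)=\R(ba)$ we get $\R(rqr^{-1})=\R(r^{-1}rq)=\R(q)$, and $|rqr^{-1}|=|r||q||r|^{-1}=|q|$. So once we show $q\sim z$ for some $z\in\mathbb{C}$, the fact that $z=\R(q)\pm|\im(q)|\,i$ (with the correct norm) follows for free.

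Next I would reduce to the purely imaginary case. Write $q=q_0+u$ with $u=\im(q)$. If $u=0$ there is nothing to prove, so assume $u\neq 0$ and set $v=u/|u|$, a pure imaginary unit quaternion satisfying $v^{2}=-1$. Since conjugation fixes real numbers, it suffices to find $r\in\H^{\times}$ with $rvr^{-1}=i$; then the same $r$ sends $q$ to $q_0+|u|\,i\in\mathbb{C}$.

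To produce $r$ explicitly, the natural guess is $r=i+v$. A direct computation gives
\begin{equation*}
 rv=(i+v)v = iv + v^{2} = iv-1 = -1+iv = i(i+v) = ir,
\end{equation*}
so $rv=ir$, hence $rvr^{-1}=i$ as soon as $r\neq 0$. The element $r=i+v$ vanishes only when $v=-i$, and in that single degenerate case one can simply take $r=j$: using $ji=-ij$ and $j^{-1}=-j$, one checks $j(-i)j^{-1}=jij=i$.

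There is no real obstacle here; the only thing one has to be careful about is the degenerate case $v=-i$, where the naive conjugator $i+v$ fails. Conceptually, the whole lemma is the statement that the group of unit quaternions acts transitively (via $\SO(3)$) on each sphere $\{w\in\im\H : |w|=c\}$, so any $u$ can be rotated onto $|u|\,i$; the computation above just makes this rotation explicit.
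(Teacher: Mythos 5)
Your proof is correct and is essentially the paper's proof in slightly different clothes: after clearing the positive scalar $|\im(q)|$, your conjugator $i+v$ coincides with the paper's $r=\im(q)+|\im(q)|\,i$ (conjugation by this $r$ swaps $q$ and $\R(q)+|\im(q)|\,i$, so the two directions of the statement agree). The added value in your write-up is the upfront observation that $\R(\cdot)$ and $|\cdot|$ are conjugation invariants, and, more substantively, the explicit treatment of the degenerate case $\im(q)=-|\im(q)|\,i$, where this $r$ vanishes; the paper's one-line proof silently skips that case, which is harmless since there $q$ is already complex.
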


\begin{proof}
Let $p=\R(q)+|\im(q)|i$. If we take $r=\im(q)+|\im(q)|i$, a direct computation shows that $rpr^{-1}=q$.
\end{proof}

\subsection{Matrices over $\H$}%
\label{sub:matrices_over_H}

Any matrix $A$ with coefficients in $\mathbb{H}$ has a conjugate $\overline{A}$ whose entries are
the conjugates of the corresponding entries of $A$, and has a transpose $A^{t}$ which columns
corresponds to the rows of $A$ in the classical way. It is clear that
$(\overline{A})^{t}=\overline{(A^{t})}$, and we denote that matrix by $A^{*}$. 
For matrices $A$ and $B$ of suitable sizes 
we can verify that $(AB)^{*}=B^{*}A^{*}$, as in the complex case.
A square matrix with coefficients in $\mathbb{H}$ is called \emph{hermitian} if $A^{*}=A$. 
It is \emph{unitary} if $AA^{*}=A^{*}A=I$, where $I$ is the identity matrix.

\subsection{A model for $\HH^n$}%
\label{sub:subsection_name}

We denote by $\H^{n,1}$ the right $\H$-module $\H^{n+1}$ equip\-ped 
with the standard hermitian product of signature $(n,1)$, given by
$$ \langle \textbf{x}, \textbf{y} \rangle= -\overline{x}_{0}y_{0}+\overline{x}_1y_1+\cdots+ \overline{x}_ny_n.$$
We consider the subspaces 
\begin{align*}
    V_{-}&=\{\textbf{x}\in \H^{n,1}\;|\;\langle \textbf{x,x}\rangle<0\},\\
    V_{0}&=\{\textbf{x}\in \H^{n,1}\smallsetminus\{0\}\;|\;\langle \textbf{x,x}\rangle=0\},\\
    V_{+}&=\{\textbf{x}\in \H^{n,1}\;|\;\langle \textbf{x,x}\rangle>0\},
\end{align*}
and the following map:
\begin{align*}
        P \colon \H^{n,1}\smallsetminus\{{\bf z}=(z_0,\cdots,z_{n})\;|\;z_{0}=0\} &\to \H^{n}\\
  \textbf{z}=\begin{pmatrix}
  z_{0}\\
  z_{1}\\
  \vdots\\
  z_{n}
  \end{pmatrix}
  \mapsto \begin{pmatrix}
    z_{1}z_{0}^{-1}\\
  z_{2}z_{0}^{-1}\\
  \vdots\\
  z_{n}z_{0}^{-1}
  \end{pmatrix}.
\end{align*}
It is clear that $P(\textbf{z})=P(\textbf{w})$ if and only if $\textbf{w}=\textbf{z}\lambda$ for some $\lambda\in\H$. 

The \textit{quaternionic hyperbolic $n$-space} is defined as
$\textbf{H}_{\mathbb{H}}^{n}=PV_{-},$ and its ideal boundary is
$\partial\textbf{H}_{\H}^{n}=PV_{0}.$ The hermitian product induces a
Riemannian metric in $\textbf{H}_{\mathbb{H}}^{n}$ given by (see \cite{KP}):
\begin{equation*}
    ds^{2}=\frac{-4}{\langle\textbf{z},\textbf{z}\rangle^{2}}\det
    \begin{pmatrix}
    \langle\textbf{z},\textbf{z}\rangle & \langle\textbf{z},d\textbf{z}\rangle\\
    \langle d\textbf{z},\textbf{z}\rangle & \langle d\textbf{z},d\textbf{z}\rangle\\
    \end{pmatrix}.
\end{equation*}
This metric is normalized so that the sectional  curvature is pinched between $-1$ and $-\frac{1}{4}$.
The distance function $\rho(\cdot,\cdot)$ in $\textbf{H}_{\mathbb{H}}^{n}$ induced by this Riemannian metric
satisfies the formula
\begin{equation}\label{distanceformula}
    \cosh^2{\left(\frac{\rho(z,w)}{2}\right)}=\frac{\langle\textbf{z},\textbf{w}\rangle\langle\textbf{w},\textbf{z}\rangle}{\langle\textbf{z},\textbf{z}\rangle\langle\textbf{w},\textbf{w}\rangle},
\end{equation}
where $\textbf{z}, \textbf{w}$ are any preimages under $P$ of $z$ and $w$  respectively. 

\subsection{The isometry group}%
\label{sub:the_isometry_group}

The set of invertible right $\H$-linear transformations of $\H^{n+1}$ identifies with the set of
invertible $(n+1)\times(n+1)$ matrices with entries in $\H$, denoted by $\GL_{n+1}(\H)$.
Let $\Sp(n,1)$ denote the subgroup of $\GL_{n+1}(\H)$ that preserves the hermitian form $\langle
\cdot, \cdot \rangle$. Equivalently,
\begin{equation*}
    \Sp(n,1)=\{A\in\GL_{n+1}(\H)| A^*JA=J\},
\end{equation*}
where $J=\text{diag}(-1,1,\dots,1)$. The elements $A \in \Sp(n,1)$ act on $\textbf{H}_{\H}^{n}$ as
follows: $A(P(\textbf{w}))=P(A(\textbf{w}))$. This action preserves the Riemannian structure on
$\textbf{H}^n_{\H}$. In fact, the isometry group $\Isom(\textbf{H}_{\H}^{n})$ is isomorphic to the
quotient $\PSp(n,1)=\Sp(n,1)/\{\pm I\}$.

\section{Eigenvalues and translation lengths in $\Sp(n,1)$}\label{Eigenvalues and translation length}

This section studies the algebraic properties of the eigenvalues in the quaternionic case
(Section~\ref{sub:eignenvalues_of_quaternionic_matrices}), 
and important implications for the translation lengths in $\HH^n$ (Proposition \ref{keytheorem}).



\subsection{Eigenvalues of quaternionic matrices}%
\label{sub:eignenvalues_of_quaternionic_matrices}

For matrices with coefficients in a general ring there is no theory of eigenvalues. However, for
division algebras there is a chance of developing this theory, and in the case of $M_{n}(\H)$ we can
trace back this to the work of C. Lee in the late 40s \cite{Lee49}. 

\begin{definition}\label{eig}
        Let $A$ be an element in $M_{n}(\H)$. An {\em (right) eigenvalue} of $A$ is a complex number $t$ such that 
\begin{equation}\label{eigenvalue}
Av=vt    
\end{equation}
for some nonzero vector  $v$ in $\H^{n}.$
\end{definition}

\begin{remark}\label{remark} 
        If $Av=vt$ for a quaternion number $t$, by Lemma \ref{similarity} we
        can find a quaternion $\lambda$ such that  $\lambda t \lambda^{-1}$ is a complex number. Then
        $Av\lambda^{-1}=v\lambda^{-1} \lambda t \lambda^{-1}$, which shows that $\lambda t
        \lambda^{-1}$  is a complex eigenvalue.
\end{remark}

Let $A$ be an element in $M_{n}(\H)$. We can write $A=A_{1}+jA_{2},$ where $A_{1}, A_{2}\in M_{n}(\mathbb{C})$, and consider the map 
\begin{align}\label{map}
\begin{split} 
        f:\; & M_{n}(\H) \rightarrow M_{2n}(\mathbb{C})\\
          & A \mapsto
        \begin{pmatrix}
          A_{1} &-\overline{A_{2}}\\
          A_{2}  &\phantom{-}\overline{A_{1}}\\
        \end{pmatrix}.
\end{split}
 \end{align}
 The next theorem summarizes Lee's results that are relevant to us. Eigenvalues are counted {\em
 with} multiplicities. 
\begin{theorem}[Lee]
        \label{iso }
        The map $f$  is an isomorphism of rings from $M_{n}(\mathbb{H})$ into its image in $M_{2n}(\mathbb{C})$.
        Moreover for any $A \in M_n(\H)$ we have:
        \begin{enumerate}
                \item the eigenvalues of $A$ corresponds exactly to those of $f(A)$, 
                        and they fall into $n$ pairs of complex conjugate numbers; 
                \item $f(A^{*})=f(A)^{*}$.
        \end{enumerate}
\end{theorem}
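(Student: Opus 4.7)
The plan is to prove each claim directly from the explicit formula for $f$, exploiting the decomposition $A = A_1 + jA_2$ and the commutation identity $jC = \overline{C}\,j$ for $C \in M_n(\mathbb{C})$ (which follows entrywise from $jc = \bar c\,j$ on $\mathbb{C}$). Additivity and injectivity of $f$ are immediate. For multiplicativity I would expand
\begin{equation*}
(A_1 + jA_2)(B_1 + jB_2) = (A_1 B_1 - \overline{A_2}\,B_2) + j(A_2 B_1 + \overline{A_1}\,B_2),
\end{equation*}
using the commutation rule together with $j^2 = -1$, and check that this matches the block product $f(A)f(B)$ coming from the block form of $f$.

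For the eigenvalue correspondence, write $v = v_1 + jv_2$ with $v_i \in \mathbb{C}^n$. The same expansion gives $Av = (A_1 v_1 - \overline{A_2}\,v_2) + j(A_2 v_1 + \overline{A_1}\,v_2)$, while $vt = v_1 t + j v_2 t$ because $t$ is complex. Matching the two $\mathbb{C}$-components of $Av = vt$ is then exactly the ordinary eigenvalue equation $f(A)\binom{v_1}{v_2} = t\binom{v_1}{v_2}$, and $(v_1,v_2)\mapsto v_1 + jv_2$ is an $\mathbb{R}$-linear bijection $\mathbb{C}^{2n}\to\mathbb{H}^n$ sending nonzero to nonzero.

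To pair the eigenvalues into $n$ complex-conjugate pairs (with multiplicity), I would verify the block identity
\begin{equation*}
J_0\, \overline{f(A)}\, J_0^{-1} = f(A), \qquad J_0 = \begin{pmatrix} 0 & -I \\ I & 0 \end{pmatrix},
\end{equation*}
by a direct block computation. This makes $f(A)$ conjugate in $M_{2n}(\mathbb{C})$ to its entrywise conjugate $\overline{f(A)}$, so $\det(\lambda I - f(A)) \in \mathbb{R}[\lambda]$ and its $2n$ complex roots split into $n$ conjugate pairs with matching multiplicities. An equivalent conceptual picture: right-multiplication by $j$ preserves $\mathbb{H}^n$ and satisfies $A(vj) = (vt)j = (vj)\bar t$, so it interchanges the $t$- and $\bar t$-eigenspaces over the ring $\mathbb{H}$. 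Finally, item (2) is a short computation: using $\overline{a + jb} = \bar a - jb$ for $a,b\in\mathbb{C}$ one writes $A^* = A_1^* + j(-A_2^t)$, and both $f(A^*)$ and the block conjugate-transpose of $f(A)$ reduce to $\begin{pmatrix} A_1^* & A_2^* \\ -A_2^t & A_1^t \end{pmatrix}$.

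The main obstacle is purely bookkeeping with the quaternionic conventions: the placement of $j$ in the decomposition $A = A_1 + jA_2$, and the sign changes from $j^2 = -1$ and $ji = -ij$. Once these are laid out consistently, every part of the theorem reduces to a short direct manipulation; no essential new idea is required beyond the commutation rule $jC = \overline{C}\,j$ that drives all of the calculations.
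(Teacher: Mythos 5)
Your proof is correct, and it takes a genuinely different route from the paper: the paper does not prove this result at all but simply cites Lee's original article (Section~4, Theorems~2 and~5 of \cite{Lee49}), whereas you give a self-contained verification from first principles. All of your computations check out. The commutation rule $jC = \overline{C}j$ is the right engine: it gives $(A_1+jA_2)(B_1+jB_2) = (A_1B_1-\overline{A_2}B_2) + j(A_2B_1+\overline{A_1}B_2)$, which matches the block product $f(A)f(B)$, and the same expansion applied to $v=v_1+jv_2$ converts $Av=vt$ (with $t\in\C$) into $f(A)\binom{v_1}{v_2}=t\binom{v_1}{v_2}$, with the obvious bijection of nonzero vectors. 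The identity $J_0\,\overline{f(A)}\,J_0^{-1}=f(A)$ is correct and forces $\det(\lambda I - f(A))\in\RR[\lambda]$, which yields the $n$ conjugate pairs with multiplicity; the alternative picture $A(vj)=(vj)\bar t$ is a valid sanity check of the same pairing at the level of eigenvectors. Finally, $A^*=A_1^*+j(-A_2^t)$ is the right decomposition (since $\overline{a+jb}=\bar a - jb$ for $a,b\in\C$), and both $f(A^*)$ and $f(A)^*$ reduce to $\bigl(\begin{smallmatrix} A_1^* & A_2^* \\ -A_2^t & A_1^t\end{smallmatrix}\bigr)$. What the paper's approach buys is brevity; what yours buys is a self-contained, checkable argument using only the convention $A=A_1+jA_2$ fixed in the text, which would make the paper independent of Lee's 1949 article.
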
  

\begin{proof}
        See \cite{Lee49}:  Section~4, and Theorems 2 and 5.
\end{proof}




\begin{corollary}\label{traceeigenvalues}

        Let $t_{1},\overline{t}_{1},\ldots,t_{n}, \overline{t_{n}}$ denote the eigenvalues of $A \in
        M_n(\H)$. Then for the trace the following holds:
        \begin{equation}
          \R(\tr(A))= \frac{t_{1}+\overline{t}_{1}+\cdots+t_{n}+\overline{t_{n}}}{2}.
        \end{equation}
        In particular, $\R(\tr(A))$ is a conjugation invariant.
\end{corollary}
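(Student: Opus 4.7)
The plan is to pass through Lee's map $f$ from Theorem~\ref{iso } and compare traces on both sides.

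First I would compute $\tr(f(A))$ in two different ways. Writing $A = A_1 + jA_2$ with $A_1, A_2 \in M_n(\C)$, the block description of $f(A)$ gives
\begin{equation*}
\tr(f(A)) = \tr(A_1) + \tr(\overline{A_1}) = \tr(A_1) + \overline{\tr(A_1)} = 2\,\R(\tr(A_1)).
\end{equation*}
On the other hand, $\tr(A) = \tr(A_1) + j\,\tr(A_2)$ as a quaternion, and since left multiplication by $j$ sends any complex number into $\C j \oplus \C ij$, which has zero real part, we get $\R(\tr(A)) = \R(\tr(A_1))$. Hence
\begin{equation*}
\tr(f(A)) = 2\,\R(\tr(A)).
\end{equation*}

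Next I would use the ordinary fact that the trace of a complex matrix is the sum of its eigenvalues (counted with multiplicity). By part (1) of Theorem~\ref{iso }, the $2n$ eigenvalues of $f(A)$ are exactly $t_1, \overline{t}_1, \ldots, t_n, \overline{t_n}$, so
\begin{equation*}
\tr(f(A)) = t_1 + \overline{t}_1 + \cdots + t_n + \overline{t_n}.
\end{equation*}
Combining the two displayed equalities yields the stated formula.

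For the last sentence, conjugation invariance of $\R(\tr(A))$ follows at once: if $B \in \GL_n(\H)$, then $f(BAB^{-1}) = f(B) f(A) f(B)^{-1}$ (because $f$ is a ring homomorphism), so these two complex matrices have the same trace, whence $\R(\tr(BAB^{-1})) = \R(\tr(A))$. Alternatively, the eigenvalues themselves are preserved under conjugation by Definition~\ref{eig}, so the right-hand side of the formula is visibly invariant. I do not anticipate a serious obstacle here; the only point requiring mild care is the observation $\R(j\,\tr(A_2)) = 0$, which is what makes the naive guess $\tr(A) = \tr(A_1)$ work after taking real parts.
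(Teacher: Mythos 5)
Your proof is correct and follows essentially the same route as the paper: the paper's proof is exactly the observation that $\R(\tr(A))=\tfrac{1}{2}\tr(f(A))$ combined with Theorem~\ref{iso }, and you have simply written out the ``direct computation'' (including the key point that $\R\bigl(j\,\tr(A_2)\bigr)=0$) that the paper leaves implicit. The conjugation-invariance argument via $f(BAB^{-1})=f(B)f(A)f(B)^{-1}$ is also fine.
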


\begin{proof}
A direct computation shows that $\R(\tr(A))=\frac{\tr(f(A))}{2}$, so that the result follows from
Theorem \ref{traceeigenvalues}.

\end{proof}

\begin{corollary}\label{eigenvalues}
        The eigenvalues of $A$ and $A^*$ coincide.
\end{corollary}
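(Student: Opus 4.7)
The plan is to transfer the question to complex matrices via Lee's isomorphism $f$ of (\ref{map}), where we already know the relationship between the spectra of a matrix and its conjugate transpose.

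More concretely, I would proceed as follows. First, invoke part (2) of the Lee theorem to write $f(A^*) = f(A)^*$, where the star on the right is the usual conjugate transpose in $M_{2n}(\mathbb{C})$. For a complex matrix, the eigenvalues of $M^*$ are precisely the complex conjugates of the eigenvalues of $M$; applied to $M = f(A)$, this gives that the spectrum of $f(A^*)$ in $M_{2n}(\mathbb{C})$ is the multiset of complex conjugates of the spectrum of $f(A)$.

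Now apply part (1) of the Lee theorem twice: the eigenvalues of $A$ (in the sense of Definition~\ref{eig}) coincide with those of $f(A)$, and likewise the eigenvalues of $A^*$ coincide with those of $f(A^*)$. Combining with the previous step, the eigenvalues of $A^*$ are obtained from those of $A$ by complex conjugation of the whole multiset.

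The final point, and the only place where the quaternionic nature really enters, is that by part (1) of Lee's theorem the eigenvalues of $A$ come in $n$ conjugate pairs $t_1,\overline{t_1},\ldots,t_n,\overline{t_n}$. Hence the multiset is invariant under complex conjugation, so the multisets of eigenvalues of $A$ and of $A^*$ are equal. I do not expect any real obstacle here; the whole argument is a bookkeeping exercise on top of Theorem~\ref{iso } (and in particular does not require any further analysis of the map $f$).
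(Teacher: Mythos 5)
Your argument is correct and is essentially identical to the paper's proof: both use $f(A^*)=f(A)^*$ from Theorem~\ref{iso }, the fact that a complex matrix and its conjugate transpose have complex-conjugate spectra, and the closure of the quaternionic spectrum under complex conjugation to conclude. No changes needed.
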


\begin{proof}
Since $f(A^{*})=f(A)^*$ the eigenvalues of $A^*$ are the eigenvalues of $f(A)^*$, which are the complex conjugates of the eigenvalues of $f(A)$. 
But the set of these eigenvalues is invariant by complex conjugation.
\end{proof}

\begin{corollary}\label{unitary}
If $A\in M_{n}(\H)$ is unitary $(A^*A=I)$, then the eigenvalues of $A$ have all norm equal to $1$.
\end{corollary}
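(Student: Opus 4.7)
My plan is to reduce the statement to the classical fact that complex unitary matrices have eigenvalues of modulus one, using Lee's isomorphism $f$ as the bridge. This matches the style already used in the preceding corollaries.

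First I would apply $f$ to the relation $A^*A = I$. Since $f$ is a ring homomorphism (by Theorem \ref{iso }) and $f(A^*) = f(A)^*$ by part (2) of the same theorem, one obtains
\begin{equation*}
f(A)^* f(A) = f(A^* A) = f(I) = I_{2n},
\end{equation*}
so $f(A)$ is a genuine unitary matrix in $M_{2n}(\C)$.

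Second, I would invoke the classical fact that every eigenvalue of a complex unitary matrix has absolute value one; this is immediate since if $f(A)w = \mu w$ with $w \neq 0$, then $\|w\|^2 = \langle f(A)w, f(A)w\rangle = |\mu|^2 \|w\|^2$.

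Finally, by part (1) of Theorem \ref{iso }, the (right, complex) eigenvalues of $A$ coincide with those of $f(A)$, so all of them have norm one. The only point requiring a hint of care is making sure that the notion of eigenvalue used in Definition \ref{eig} lines up with the ordinary complex eigenvalues of $f(A)$; but this is precisely what Lee's result gives us, so there is no real obstacle. As an aside, one could also prove the statement directly by computing $\langle v, v\rangle = \langle A^*A v, v\rangle = \langle Av, Av\rangle = \langle vt, vt\rangle = |t|^2 \langle v, v\rangle$ for an eigenvector $v$ with complex eigenvalue $t$, but the approach via $f$ is more in keeping with the framework developed above.
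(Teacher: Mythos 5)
Your proof is correct and is essentially the same as the paper's: both pass through Lee's map $f$, note that $f(A)$ is unitary in $M_{2n}(\C)$ by Theorem \ref{iso } (2), and invoke the classical fact together with part (1) to transfer the conclusion back to $A$. You merely spell out the intermediate ring-homomorphism computation (and offer a direct alternative as an aside), which the paper leaves implicit.
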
 
 
 \begin{proof}
         If $A$ is unitary, then $f(A)$ is a unitary matrix in $M_{2n}(\mathbb{C})$ by
         Theorem~\ref{iso } (2). It is known that eigenvalues of unitary complex matrices have norm
         equal to $1$.
 \end{proof}

\subsection{Translation lengths in $\Sp(n,1)$}
\label{sub:translation_lengths_in_SP} 

We start with the following easy observation.
\begin{lemma} \label{lemma:t-inverse}
        Let $A \in \Sp(n,1)$. If $t \in \C$ is an eigenvalue of $A$ then so is $t^{-1}$.
\end{lemma}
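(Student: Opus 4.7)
The plan is to combine the defining relation $A^* J A = J$ of $\Sp(n,1)$ with the symmetry of the eigenvalue set already established in Corollary \ref{eigenvalues}. Since $J$ is invertible, so is $A$, and rearranging the defining relation gives
\begin{equation*}
A^* = J A^{-1} J^{-1}.
\end{equation*}
Thus $A^*$ and $A^{-1}$ are conjugate in $M_{n+1}(\H)$, and the usual transport-of-eigenvector argument (if $C v = v t$ and $B = P^{-1} C P$, then $B(P^{-1} v) = (P^{-1} v) t$) shows that conjugate matrices share the same eigenvalues, so the eigenvalues of $A^*$ coincide with those of $A^{-1}$.

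Next I would verify that $t^{-1}$ is an eigenvalue of $A^{-1}$. Because $A$ is invertible, the eigenvalue $t$ is nonzero, so $t^{-1} \in \C \subset \H$ is defined. Starting from $A v = v t$ with $v \neq 0$, I multiply on the left by $A^{-1}$ and on the right by $t^{-1}$. The right-multiplication by $t$ followed by right-multiplication by $t^{-1}$ composes to right-multiplication by $t \cdot t^{-1} = 1$ in $\H$ (this is the only place where the noncommutativity of $\H$ needs care, and the argument works precisely because $t$ and its inverse live in the same scalar that acts on $v$). This yields $A^{-1} v = v t^{-1}$, so $t^{-1}$ is an eigenvalue of $A^{-1}$.

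Combining the two steps with Corollary \ref{eigenvalues}, which gives that the eigenvalues of $A$ and $A^*$ agree, we conclude that the eigenvalue sets of $A$, $A^*$, and $A^{-1}$ all coincide. In particular $t^{-1}$ is an eigenvalue of $A$. I do not foresee any real obstacle here, since the statement is flagged as an easy observation; the only point meriting attention is the verification that the passage from $Av = vt$ to $A^{-1} v = v t^{-1}$ is legitimate over $\H$, which is immediate from $t \in \C$.
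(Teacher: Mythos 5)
Your argument is correct and is essentially the paper's own proof: rewrite $A^*JA=J$ as a conjugacy between $A^*$ and $A^{-1}$, note that $Av=vt$ gives $A^{-1}v=vt^{-1}$, and invoke Corollary \ref{eigenvalues} to identify the eigenvalues of $A^*$ with those of $A$. You simply spell out the conjugation and scalar-transport steps that the paper labels as clear.
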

\begin{proof}
      If $t$ is an eigenvalue of $A$ then clearly $t^{-1}$ is an eigenvalue of
      $A^{-1}$. Now, the equation $A^*JA=J$ implies $A^{-1}=J^{-1}A^*J$, and thus $A^{-1}$ has
      the same eigenvalues as $A^*$. But these are the same as the eigenvalues of $A$ by Corollary
      \ref{eigenvalues}. It follows that $t^{-1}$ is an eigenvalue of $A$.
\end{proof}

We can now prove the main result of this section. For the results concerning the algebra of
hermitian spaces we refer the reader to \cite{Scharlau12}.
 
\begin{proposition}
Let $A \in \Sp(n,1)$, and assume that $A$ leaves invariant a geodesic of $\HH^n$.
\begin{enumerate}\label{keytheorem}
     \item There are exactly $4$ eigenvalues of $A$ with norm different from $1$.
             If $t$ is one of such eigenvalues, the other such eigenvalues are given by $\overline{t}, t^{-1}$ and $\overline{t}^{-1}.$
     \item Assume that $|t|>1$ in (1). Then, the translation length $\ell_{A}$ of $A$ along the geodesic satisfies the equation $$\ell_{A}=2\ln(|t|).$$
  \end{enumerate}
\end{proposition}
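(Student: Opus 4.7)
The plan is to combine a Witt-index argument on $\H^{n,1}$ with a direct calculation on the quaternionic $2$-plane $V\subset\H^{n,1}$ spanned by lifts of the endpoints of the invariant geodesic; both parts of the statement then fall out together.

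For the upper bound in (1), I start with an isotropy principle: any eigenvector $v$ of $A$ with complex eigenvalue $t$ satisfies
\begin{equation*}
\langle v,v\rangle=\langle Av,Av\rangle=\bar t\,\langle v,v\rangle\, t=|t|^2\,\langle v,v\rangle,
\end{equation*}
so $|t|\ne 1$ forces $\langle v,v\rangle=0$. The nearly identical computation on $\langle v,w\rangle$ for two eigenvectors with eigenvalues both of modulus $>1$ (and separately both of modulus $<1$) shows they are orthogonal, so the $\H$-span of each such family is totally isotropic. Since $\H^{n,1}$ has signature $(n,1)$, the Witt index over $\H$ is $1$, so each such span has $\H$-dimension at most $1$; via Lee's correspondence (Theorem \ref{iso }) this yields at most two complex eigenvalues of modulus $>1$ and at most two of modulus $<1$.

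For the lower bound and (2), the loxodromic $A$ fixes the two distinct endpoints of its axis on $\partial\HH^n=PV_0$, lifted to isotropic vectors $v_\pm\in V_0$. Restricting to $V=v_+\H\oplus v_-\H$, the form has signature $(1,1)$, so after rescaling $v_-$ we may assume $\langle v_+,v_-\rangle=1$ with $\langle v_\pm,v_\pm\rangle=0$. Then $A|_V=\mathrm{diag}(a,d)$ for some $a,d\in\H$, and invariance of the form forces $\bar a d=1$, i.e., $d=\bar a^{-1}$. Parametrize the axis by $\gamma(s)=P(v_s)$ with $v_s=v_+-v_-e^s$; the identities $\langle v_s,v_s\rangle=-2e^s$ and $\langle v_{s_1},v_{s_2}\rangle=-(e^{s_1}+e^{s_2})$ plugged into \eqref{distanceformula} give $\rho(\gamma(s_1),\gamma(s_2))=|s_1-s_2|$, so $s$ is the arc-length parameter. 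Using $e^s\in\RR$ and $\bar a^{-1}a^{-1}=|a|^{-2}$ one computes
\begin{equation*}
A(v_+-v_-e^s)=v_+ a-v_-\bar a^{-1}e^s=(v_+-v_-e^{s-2\ln|a|})\,a,
\end{equation*}
and projection by $P$ kills the right factor $a$, so $A$ acts on the axis by $s\mapsto s-2\ln|a|$. Since $A$ is loxodromic $\ell_A>0$, so $|a|\ne 1$; WLOG $|a|>1$ (otherwise swap $v_+\leftrightarrow v_-$), giving $\ell_A=2\ln|a|$. By Lemma \ref{similarity} the complex right eigenvalues of $a$ form a conjugate pair $\{t,\bar t\}$ with $|t|=|a|>1$; Remark \ref{remark} lets us rescale $v_+$ so that $Av_+=v_+t$, and Lemma \ref{lemma:t-inverse} then produces $t^{-1}$, giving $\{t,\bar t,t^{-1},\bar t^{-1}\}$ as eigenvalues, matching the upper bound and proving (1). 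Simultaneously $\ell_A=2\ln|t|$, proving (2).

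The main obstacle is careful bookkeeping across the three equivalent descriptions of the eigenvalue — the quaternion $a$, the complex right eigenvalue $t$ obtained from rescaling $v_+$, and the conjugate pair $\{t,\bar t\}$ inside $f(A)$ — while simultaneously keeping the basis $\{v_+,v_-\}$ in hyperbolic normal form so that $A|_V$ diagonalizes with the form in standard shape. The isotropy-plus-Witt-index step is the conceptual core of (1), and the distance-formula verification that $s$ is arc length is the technical core of (2).
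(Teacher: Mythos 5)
Your argument is sound in outline and genuinely different from the paper's. For part (2) you work invariantly with a hyperbolic pair: lifts $v_\pm$ of the fixed endpoints normalized so that $\langle v_+,v_-\rangle=1$, the diagonal form $A|_V=\mathrm{diag}(a,\overline{a}^{-1})$, and the verification via \eqref{distanceformula} that $s\mapsto P(v_+-v_-e^s)$ is an arc-length parametrization of the axis; the paper instead conjugates $A$ so that it preserves the model geodesic $(\tanh s,0,\dots,0)$ and reads off $\beta=e^{s_0}w$, $\lambda=e^{-s_0}w$ in coordinates. The two computations are equivalent, and your identities ($\overline{a}d=1$, $\langle v_s,v_s\rangle=-2e^s$, $A v_s=v_{s-2\ln|a|}a$) are all correct, as is the passage from the quaternion $a$ to a complex eigenvalue $t$ via Lemma \ref{similarity} and Remark \ref{remark}, and the use of Lemma \ref{lemma:t-inverse} to produce $t^{-1}$.

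The one genuine gap is in your upper bound for part (1). The paper counts eigenvalues \emph{with multiplicity} (this is the stated convention, and it is what ``exactly $4$'' means), and your isotropy/Witt-index argument only controls eigen\emph{vectors}, i.e.\ geometric multiplicities: it shows that the \emph{set} of eigenvalues of modulus $>1$ is a single conjugate pair $\{t,\overline{t}\}$, but it does not by itself exclude that $t$ occurs in $f(A)$ with algebraic multiplicity $2$ (a Jordan block), which would give more than $4$ eigenvalues of norm $\neq 1$. To close this you can either run the same pairing argument on \emph{generalized} eigenspaces of $f(A)$ with respect to the complex hermitian form of signature $(2n,2)$ preserved by $f(A)$ (the generalized eigenspaces for $t$ and $s$ pair trivially unless $\overline{t}s=1$, so the span of all generalized eigenspaces with modulus $>1$ is totally isotropic), or, more cheaply, use what you already have: since $V$ is $A$-invariant and nondegenerate of signature $(1,1)$, its orthogonal complement $V^{\perp}$ is $A$-invariant and positive definite, so $A|_{V^{\perp}}$ is unitary and all its eigenvalues have norm $1$ (Corollary \ref{unitary}); as the eigenvalues of $A$ are those of $A|_V$ (exactly $t,\overline{t},t^{-1},\overline{t}^{-1}$, all of modulus $\neq1$) together with those of $A|_{V^{\perp}}$, the count with multiplicity follows. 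That last step is exactly the paper's route, and with that one sentence added your proof of Proposition \ref{keytheorem} is complete.
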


\begin{proof}
Being of real rank $1$, the Lie group $\Sp(n,1)$ acts transitively on the set of geodesics of
its associated symmetric space $\HH^n$. Therefore, after conjugation, we may assume  
that $A$ fixes the geodesic curve  $\alpha(s)=(\tanh{s},0\ldots,0)$.
Let $\textbf{v}_{0}=(1,-1,\ldots, 0)$ and $\textbf{v}_{1}=(1,1,0,\ldots, 0)$, 
so that $P(\textbf{v}_{0})$ and $P(\textbf{v}_{1})$ are the 
limit points of $\alpha$ in $\partial\textbf{H}_{\H}^{n}$. In particular those limit points are fixed
by $A$, i.e., there exist $\lambda, \beta\in\H$ such that 

\begin{align*}
     A\textbf{v}_{0}&=\textbf{v}_{0}\lambda, \hspace{1cm}\\ A\textbf{v}_{1}&=\textbf{v}_{1}\beta.
\end{align*}
In terms of the standard basis $\textbf{e}_{0},\ldots,\textbf{e}_{n}$ of $\H^{n,1}$ we obtain:

\begin{align}
        A\textbf{e}_{0}&=\textbf{e}_{0}\frac{\beta+\lambda}{2}+\textbf{e}_{1}\frac{\beta-\lambda}{2};\\
        A\textbf{e}_{1}&=\textbf{e}_{0}\frac{\beta-\lambda}{2}+\textbf{e}_{1}\frac{\beta+\lambda}{2}
        \label{lastcolumn}.
\end{align}
It also shows that the right $\H$-submodule $V$ of $\H^{n,1}$ generated by
$\{\mathbf{v}_{0},\mathbf{v}_{1}\}$ coincides with the right $\H$-submodule of
$\H^{n,1}$ generated by $\{\mathbf{e}_{0},\mathbf{e}_{1}\}$, and $A$ leaves $V$
invariant. Since the hermitian form restricted to $V$ has signature $(1,1)$,
its restriction to $V^\perp$ has signature $(n-1,0)$. Now, the fact that $A$
preserves the hermitian form implies that its restriction to the submodule
$V^\perp$ is a unitary transformation. Since $\textbf{e}_{2},\ldots,
\textbf{e}_{n}$ are orthogonal to $\textbf{e}_{0}$ and $\textbf{e}_{1}$ (with
respect to the hermitian form $J$), they generate $V^{\perp}$, and in a
suitable basis the matrix $A$ has the form


\begin{equation*}
A=\begin{pmatrix}
\lambda & 0 & 0 \\
0 & \beta  & 0\\
0 & 0 & B
\end{pmatrix}
\end{equation*}
where $B \in M_{n-1}(\H)$ is unitary. By Corollary \ref{unitary} the eigenvalues of $B$ have norm
$1$; it thus remains to study the eigenvalues of the matrix
\begin{equation}\label{submatrix}
\begin{pmatrix}
 \lambda & 0\\
 0 & \beta\\
\end{pmatrix}.
\end{equation}


Since $A$ preserves the geodesic $\alpha(s)=(\tanh{s},0,\ldots,0)$, and
$\alpha(0)=P(\textbf{e}_{0})$, it follows that  $A(P(\textbf{e}_{0})) =(\tanh{s_{0}},\ldots,0)$
for some real number $s_{0}>0$. Moreover, the equality
$$(\tanh{s_{0}},0,\ldots,0)=P((\cosh{s_{0}}, \sinh{s_{0}},0,\ldots,0))$$ implies the existence of
$w\in\H$ such that


\begin{equation*}
A(\textbf{e}_{0})=
\begin{pmatrix}
\cosh{s_{0}}\\
\sinh{s_{0}}\\
\vdots\\
0\\
\end{pmatrix}w. 
\end{equation*}

Since $A$ preserves the hermitian form, $w$ has quaternion norm equal to $1$. Comparing with \eqref{lastcolumn} we obtain the equations
\begin{align*}
        \frac{\beta-\lambda}{2}=\sinh{s_{0}}w \quad&\mbox{and}\quad \frac{\beta+\lambda}{2}=\cosh{s_{0}}w,
\end{align*}
which imply that $\beta =e^{s_{0}}w$, and $\lambda = e^{-s_{0}}w$. In particular, $\lambda$ and
$\beta$ have norm different from 1, and they satisfy the relations $\overline{\lambda}=\beta^{-1}$,
$\overline{\beta}=\lambda^{-1}.$ Since quaternions are similar to complex numbers with the same real
part and norm (Lemma \ref{similarity}), we may  assume that $\lambda$ and $\beta$ are complex
numbers, and then eigenvalues of $A$. See Remark \ref{remark}. Moreover, similarity in $\H$ commutes
with conjugation and preserves the norm, and then we obtain that the eigenvalues of the matrix
\eqref{submatrix} have the form $t$, $t^{-1}$, $\overline{t}$, $\overline{t}^{-1}$ with $t \in \C$
such that $|t|>1$.

The translation length $\ell_{A}$ of $A$ is equal to the distance
between $P(\textbf{e}_{0})$ and $P(A\textbf{e}_{0}).$ Using the distance formula
\eqref{distanceformula}, a direct computation shows that 
\begin{equation*}
\ell_{A}=\rho(P(\textbf{e}_{0}),A(P(\textbf{e}_{0})))=2s_{0}=2\log(|\beta|)=2\log(|t|),
\end{equation*}
proving the second statement.
\end{proof}

\begin{corollary}\label{lenghttrace}
Let $A\in\Sp(n,1)$ leaving invariant a geodesic in $\HH^{n}$. 
Then $$e^{\frac{\ell_{A}}{2}}\geq\frac{\lvert\R(\tr(A))\rvert}{n+1}.$$
\end{corollary}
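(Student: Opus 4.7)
The plan is to combine Proposition \ref{keytheorem} with the trace formula of Corollary \ref{traceeigenvalues}, and then bound the contribution of each conjugate pair of eigenvalues separately.

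By Theorem \ref{iso }, the matrix $A \in \Sp(n,1)$ has $2(n+1)$ eigenvalues (counted with multiplicity) arranged in $n+1$ complex conjugate pairs. By Proposition \ref{keytheorem}, exactly four of them lie off the unit circle, and these form the two pairs $\{t,\bar{t}\}$ and $\{t^{-1},\bar{t}^{-1}\}$, with $|t|>1$ and $\ell_A = 2\log|t|$. The remaining $2(n-1)$ eigenvalues come from the unitary block $B \in M_{n-1}(\H)$ appearing in the proof of Proposition \ref{keytheorem}, and by Corollary \ref{unitary} they all have modulus equal to $1$; they form $n-1$ conjugate pairs $\{s_k,\bar{s}_k\}$.

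Next, I would apply Corollary \ref{traceeigenvalues} to write
\begin{equation*}
2\,\R(\tr(A)) \;=\; (t+\bar{t}) + (t^{-1}+\bar{t}^{-1}) + \sum_{k=1}^{n-1}(s_k+\bar{s}_k),
\end{equation*}
and then bound each pair in absolute value by twice the modulus of the eigenvalue: $|t+\bar{t}| \leq 2|t|$, $|t^{-1}+\bar{t}^{-1}| \leq 2|t|^{-1}$, and $|s_k+\bar{s}_k|\leq 2$ since $|s_k|=1$. This yields
\begin{equation*}
2\,|\R(\tr(A))| \;\leq\; 2|t| + 2|t|^{-1} + 2(n-1).
\end{equation*}

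To conclude it suffices to verify the elementary inequality $(n+1)|t| \geq |t| + |t|^{-1} + (n-1)$, which rearranges to $n|t| - |t|^{-1} \geq n-1$; this holds for all $|t|\geq 1$, since the left-hand side equals $n-1$ at $|t|=1$ and its derivative $n+|t|^{-2}$ is positive. Dividing by $n+1$ and using $e^{\ell_A/2}=|t|$ will give the desired bound. The argument is essentially an eigenvalue bookkeeping exercise built on Proposition \ref{keytheorem}; no step looks technically hard, the only thing to get right is the multiplicity count and the direction of the elementary estimate at the end.
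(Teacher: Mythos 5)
Your proposal is correct and follows essentially the same route as the paper: combine Proposition \ref{keytheorem} with Corollary \ref{traceeigenvalues} and bound the eigenvalue sum by the triangle inequality. The only difference is that the paper simply bounds every one of the $2(n+1)$ eigenvalue moduli by the largest one $|t|$, which gives $\lvert\R(\tr(A))\rvert \le (n+1)|t|$ at once and avoids your (correct but unnecessary) elementary inequality $n|t|-|t|^{-1}\ge n-1$.
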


\begin{proof}
Let $t_{1}, \overline{t}_{1},\ldots,t_{n+1}, \overline{t}_{n+1}$ be the eigenvalues of $A$.
Assume that $t_{1}$ (or $\overline{t}_{1}$) is the eigenvalue with largest norm. By Proposition
\ref{keytheorem} we have $\ell_{A}=2\log(|t_{1}|)$, and applying Corollary \ref{traceeigenvalues} we
obtain
\begin{align*}
    \frac{\lvert\R(\tr(A))\rvert}{n+1}&=\frac{\lvert t_{1}+\overline{t}_{1}+\cdots+t_{n+1}+\overline{t}_{n+1}\rvert}{2(n+1)}\\
    &\leq\frac{\lvert t_{1}\rvert+\lvert\overline{t}_{1}\rvert+\cdots+\lvert t_{n+1}\rvert+\lvert\overline{t}_{n+1}\rvert}{2(n+1)}\\
    &\leq\lvert t_{1}\rvert\\
    &=e^{\frac{\ell_{A}}{2}}.
\end{align*}
\end{proof}

\section{Arithmetic subgroups of $\Sp(n,1)$}\label{arithmeticgroups}

\subsection{Arithmeticity of lattices}%
\label{sub:arithmeticity_of_lattices}

Combined work of Margulis, Corlette, and Gromov-Schoen shows that any lattice $\Gamma$ in
$\Sp(n,1)$ is arithmetic.  That is, there exists a number field $k$ with degree $[\mathbb Q: k]=d$ and an
absolutely simple algebraic $k$-group $\textbf{G}$ such that 
\begin{equation}\label{admisibility}
    \textbf{G}(k\otimes_{\mathbb{Q}}\mathbb{R})\cong \Sp(n,1)\times K,
\end{equation}
where $K$ is a compact group, and $\Gamma$ is commensurable with
$\textbf{G}(\mathcal{O}_{k})=\mathbf{G}\cap\GL_{m}\left (\mathcal{O}_{k}\right)$
for a fixed embedding $\mathbf{G}\rightarrow\GL_{m}$. A group $\G$ satisfying \eqref{admisibility} is
called \emph{admissible}.

The condition \eqref{admisibility} implies that $k$ is a totally real number field, $\textbf{G}$ is
a simply connected algebraic $k$-group of type $\mathrm{C}_{n+1}$, and by fixing an embedding $k\subset
\mathbb{R}$ we may assume that $\textbf{G}(\mathbb{R})=\Sp(n,1).$ Moreover, $\Gamma$ is cocompact if
and only if $k \neq \Q$ (see \cite[Prop.~2.8]{EK18}).

\subsection{Admissible groups}%
\label{sub:admissible_groups}

By the classification of simple algebraic groups any admissible $k$-group $\G$ 
is isomorphic to a unitary group $\U(V,h)$, where $D$ is a quaternion
algebra over $k$, and $V$ is the right $D$-module $D^{n+1}$ equipped with a nondegenerate hermitian form $h$
which is sesquilinear with respect to the standard involution of $D$.  
More precisely, we have the following.
\begin{proposition} \label{lem:admissible-a}
        Let $k \subset \mathbb{R}$ be a totally real number field. Any admissible $k$-group $\G$ is of the form
        $\U(V,h_a)$, where $a \in \O_k$ and
        \begin{align}\label{hermitianform}
                 h_a(x,y)=-a \overline{x}_{0}y_{0}+\sum_{i=1}^{n} \overline{x}_{i}y_{i}.
        \end{align}
\end{proposition}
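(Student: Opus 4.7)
The plan combines the structure theorem for admissible groups with a diagonalization of the hermitian form and the Hasse principle for reduced norms of quaternion algebras. The classification of absolutely simple simply connected $k$-groups of type $\mathrm{C}_{n+1}$ (referred to just before the proposition) writes $\G = \U(V, h)$ for some quaternion $k$-algebra $D$ equipped with its canonical symplectic involution, $V = D^{n+1}$, and a nondegenerate hermitian form $h$. Since $(D, \sigma)$ is a division algebra with involution, $h$ can be diagonalized over $D$, so in a suitable $D$-basis
\begin{equation*}
 h(x, y) = \sum_{i=0}^{n} a_i \overline{x}_i y_i \qquad (a_i \in k^\times),
\end{equation*}
the coefficients lying in $k$ because each $a_i = h(e_i, e_i)$ is fixed by the involution.

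Next I would use admissibility to pin down the archimedean data. At a real place $v$ of $k$, the group $\U(V, h)(k_v)$ is isomorphic to $\Sp_{2(n+1)}(\RR)$ if $D_v$ splits (by Morita equivalence), and to $\Sp(p_v, q_v)$ with $p_v + q_v = n+1$ if $D_v$ is ramified. Since $\G(k \otimes_\Q \RR) \cong \Sp(n,1) \times K$ with $K$ compact and $\Sp_{2(n+1)}(\RR)$ is not compact, $D$ must be ramified at every real place of $k$; moreover $h$ has signature $(n,1)$ at the distinguished place $v_0$ and is definite at every other real place. Replacing $h$ by $c h$ for a suitable $c \in k^\times$ chosen by weak approximation (which does not change $\U(V, h)$), I may assume $h$ is positive definite at every $v \neq v_0$. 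After reordering the basis, I then have $a_0 < 0$ at $v_0$, $a_0 > 0$ at every other real place, and $a_i > 0$ at every real place for each $i \geq 1$.

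It remains to normalize the coefficients. For $i \geq 1$, the element $a_i$ is totally positive, hence a local reduced norm from $D_v^\times$ at every place of $k$: this is automatic at non-archimedean places and at split real places, and at ramified real places it follows from positivity, since $\mathrm{Nrd}(\H^\times) = \RR_{>0}$. By the Hasse--Schilling--Maass principle for reduced norms on a quaternion algebra over a number field, there exists $q_i \in D^\times$ with $\mathrm{Nrd}(q_i) = a_i$; replacing $e_i$ by $e_i q_i^{-1}$ multiplies the $i$-th diagonal coefficient by $\mathrm{Nrd}(q_i)^{-1}$ and thus normalizes $a_i$ to $1$. For the remaining entry, choose $c \in k^\times$ that clears the denominator of $a_0$ and rescale $e_0$ by $c$: since $\mathrm{Nrd}(c \cdot 1_D) = c^2$, this replaces $a_0$ by $c^2 a_0 \in \O_k$ without changing its sign at any real place. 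Setting $a := -a_0 \in \O_k$ gives $h \cong h_a$, hence $\G \cong \U(V, h_a)$.

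The main hurdle is recognizing that the statement is essentially an instance of the Hasse principle for reduced norms; once this is identified, the remaining steps (diagonalization, the sign analysis via admissibility, and denominator clearing through scaling by squares) are routine.
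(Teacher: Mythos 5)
Your proof is correct, but it follows a genuinely different route from the paper's. The paper's proof is a two-line reduction: it invokes \cite[Prop.~2.6]{EK18}, which says that the admissible group is determined by the quaternion algebra $D$ alone once the hermitian form has the correct signature at each archimedean place, and then simply produces $a \in \O_k$ with $a>0$ at the distinguished embedding and $\sigma(a)<0$ at all nontrivial embeddings by weak approximation, so that $h_a$ has those signatures. You instead reprove the relevant uniqueness statement by hand: diagonalize $h$ over $D$, use admissibility to see that $D$ ramifies at every real place and to pin down the signs of the diagonal coefficients (after a global rescaling of $h$, which does not change the unitary group), normalize the $n$ totally positive coefficients to $1$ via the Hasse--Schilling--Maass norm theorem by rescaling $e_i$ by $q_i^{-1}$ with $\mathrm{Nrd}(q_i)=a_i$, and finally clear the denominator of $a_0$ by scaling $e_0$ by a central element, which multiplies $a_0$ by a square. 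This is self-contained modulo standard facts (diagonalizability of hermitian forms over a division algebra with involution, HSM, weak approximation) and actually produces an isometry $h \cong h_a$, which is slightly stronger than the group isomorphism the paper extracts from the citation; the paper's route is shorter because the classification of these groups over number fields is already packaged in \cite{EK18}. One small ordering point: diagonalizability requires $D$ to be a division algebra, which you only justify afterwards via ramification at the real places; you should first record that $\G(\RR)=\Sp(n,1)$ forces $D\otimes_k\RR\cong\H$ at the distinguished place (as in \cite[Cor.~2.5]{EK18}), hence $D$ is division, and then diagonalize. With that reshuffling the argument is complete.
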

\begin{proof}
It follows from
\cite[Prop.~2.6]{EK18} that $\G$ depends only on $D$, and not on the choice of $h$ provided the latter has the
correct signature over the different embedding $\sigma: k \to \RR$. Then it suffices to choose a
positive $a \in \O_k$ with $\sigma(a)<0$ for all nontrivial embeddings $\sigma: k \to \RR$; such an
element exists by weak approximation (see \cite[Sect.~1.2.2]{PR94}).
\end{proof}

\subsection{A group scheme structure}%
\label{sub:a_group_scheme_structure}

We fix an order $\mathcal{O}_{D}$ of the quaternion algebra $D$, and consider the lattice $L =
\O_D^{n+1}$ in $V$. Choosing $h$ as in \eqref{hermitianform} we obtain a hermitian module $(L,h)$ 
(more generally it suffices to take $h$ with integral coefficients). 
For any ring extension $\O_k \to A$ we consider the unitary group
\begin{align}
        \label{eq:gp-scheme}
        \G^L(A) &= \U(L \otimes_{\O_k} A, h).
\end{align}
Then $\G^L$ defines an affine group scheme over $\O_k$ with generic fiber $\G$; it is a closed subgroup
of the group scheme $\mathfrak{End}_{\O_D}(L)^\times$ defined by taking invertible endomorphisms
(see \cite[Sect.~II.2.6]{DG}). In particular the arithmetic subgroup $\Gamma = \G^L(\O_k)$ can be seen as a subgroup of the matrix
group $\GL_{n+1}(\O_D)$.

For any ideal $I \subset \O_k$ we define the principal congruence subgroup
$\Gamma_I$ as the kernel of the natural map $\G^L(\O_k) \to \G^L(\O_k/I)$. 

\begin{proposition} \label{prop:mod-I-as-quaternions}
        The subgroup $\Gamma_I$ corresponds to the kernel of the map $\Gamma \to
        \GL_{n+1}(\O_D/I\O_D)$. 
\end{proposition}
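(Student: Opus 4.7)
The plan is to unwind the definitions and check that the kernel of the group-scheme reduction map coincides with the kernel of the matrix reduction. By construction $L = \O_D^{n+1}$ is a free right $\O_D$-module, and for any $\O_k$-algebra $A$ one has a natural identification
\begin{equation*}
        L \otimes_{\O_k} A \;=\; \O_D^{n+1} \otimes_{\O_k} A \;=\; (\O_D \otimes_{\O_k} A)^{n+1},
\end{equation*}
so that $\mathfrak{End}_{\O_D \otimes A}(L \otimes_{\O_k} A) \cong M_{n+1}(\O_D \otimes_{\O_k} A)$ via left multiplication. Applied to $A = \O_k/I$, the right-hand side becomes $M_{n+1}(\O_D/I\O_D)$, since the tensor product $\O_D \otimes_{\O_k} (\O_k/I)$ agrees with $\O_D/I\O_D$.

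Next, I would invoke the closed immersion of group schemes $\G^L \hookrightarrow \mathfrak{End}_{\O_D}(L)^\times$ mentioned after \eqref{eq:gp-scheme}. By functoriality in $A$, this yields a commutative square
\begin{equation*}
        \begin{CD}
                \G^L(\O_k) @>>> \GL_{n+1}(\O_D)\\
                @VVV @VV{\pi_I}V\\
                \G^L(\O_k/I) @>>> \GL_{n+1}(\O_D/I\O_D),
        \end{CD}
\end{equation*}
in which the horizontal arrows are injective and the right vertical arrow $\pi_I$ is the usual entry-wise reduction modulo $I\O_D$.

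Then I would conclude by a direct diagram chase: an element $g \in \Gamma = \G^L(\O_k)$ lies in $\Gamma_I$ exactly when its image in $\G^L(\O_k/I)$ is trivial, and because the bottom horizontal inclusion is injective, this is equivalent to $\pi_I(g) = 1$ in $\GL_{n+1}(\O_D/I\O_D)$. This is precisely the kernel of the matrix reduction map $\Gamma \to \GL_{n+1}(\O_D/I\O_D)$.

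The only real point to verify carefully is the functoriality that makes the square above commute; this is where the description of $\G^L$ as a closed subgroup scheme of $\mathfrak{End}_{\O_D}(L)^\times$ is used in an essential way, together with the fact that $\O_D$ is $\O_k$-flat, so that passing to $\O_k/I$ does not alter the ambient matrix ring beyond the expected reduction $\O_D \to \O_D/I\O_D$. Once this is in place, the statement is a formal consequence.
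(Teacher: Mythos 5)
Your argument is correct and takes essentially the same route as the paper's (very terse) proof: unwind the definition $\G^L(A)=\U(L\otimes_{\O_k}A,h)$ inside $M_{n+1}(\O_D\otimes_{\O_k}A)$ and use the canonical isomorphism $\O_D\otimes_{\O_k}\O_k/I\cong\O_D/I\O_D$; your commutative square and diagram chase simply make this explicit. (One small remark: flatness of $\O_D$ over $\O_k$ is not needed here, since $\O_D\otimes_{\O_k}\O_k/I\cong\O_D/I\O_D$ already follows from right-exactness of the tensor product.)
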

\begin{proof}
        In view of the definition \eqref{eq:gp-scheme} this follows directly from the isomorphism
        $\O_D \otimes_{\O_k} \O_k/I \cong \O_D/I\O_D$.
\end{proof}

\subsection{Localizations}%
\label{sub:localizations}

We will denote by $\V = \Vf \cup \Vi$ the set of (finite and infinite) places of $k$. 
For any $v\in\V$ the symbol $k_{v}$ denotes the completion of $k$ with respect to $v$, and
$D_{v}=D\otimes_{k}k_{v}$.  It follows from the admissibility of $\G = \U(V, h)$ that $D_v \cong \H$
for each $v \in \Vi$; see \cite[Cor.~2.5]{EK18}.
For $v \in \Vf$ we denote by $\o_{v}$ the valuation ring of $k_{v}$, and by $\pi_{v}$ a uniformizer in $\o_v$.

For $L = \O_D^{n+1}$ and $e\in \mathbb{N}$ we set $P^{(e)}_v = \ker(\G^L(\o_v) \to
\G^L(\o_v/\pi_v^e))$. Note that $P_v^{(0)} = \G^L(\o_v)$, and the latter is a hyperspecial
parahoric subgroup of $\G(k_v)$ for all but finitely many $v \in \Vf$; see \cite[Sect.~3.9.1]{Tits}.
We shall use the notation $P_v = P_v^{(0)}$.
The group $\Gamma = \G^L(\O_k)$ can thus be written as $\G(k) \cap \prod_{v\in\Vf} P_v$.
Let $I \subset \O_k$ be an ideal with prime factorization $I = \prod_{v\in\Vf} \p_v^{e_v}$.
Then it follows from the Chinese reminder theorem that 
\begin{align}
        \label{eq:localization}
       \Gamma_I &= \G(k) \cap \prod_{v \in \Vf} P_v^{(e_v)}.
\end{align}
The following lemma is a well-known consequence of the strong approximation property for $\G$. 
\begin{lemma} \label{lem:index-str-ap}
        For the index the following equality holds:
        \begin{align*}
                [\Gamma:\Gamma_I] =  \prod_{v \in \Vf} [P_v:P_v^{(e_v)}]. 
        \end{align*}
\end{lemma}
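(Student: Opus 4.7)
The plan is to prove this via strong approximation for $\G$. Since $\G$ is simply connected of type $\mathrm{C}_{n+1}$ (Section~\ref{sub:arithmeticity_of_lattices}) and $\G(k_{v_0}) \cong \Sp(n,1)\times K$ is non-compact at a fixed infinite place $v_0$, strong approximation applies: the diagonal embedding $\G(k) \hookrightarrow \prod'_{v \in \Vf} \G(k_v)$ has dense image in the restricted product with respect to the family $\{P_v\}$.

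First I would reduce the product on the right to a finite one. For $v \notin \{v : e_v > 0\}$ we have $e_v = 0$ and $P_v^{(e_v)} = P_v$, so $[P_v:P_v^{(e_v)}]=1$; thus only the finitely many $v$ dividing $I$ contribute. Next, define the reduction homomorphism
\begin{equation*}
        \varphi\colon \Gamma \longrightarrow \prod_{v \in \Vf} P_v/P_v^{(e_v)}
\end{equation*}
by componentwise reduction. This is well-defined since $\Gamma \subseteq \prod_{v \in \Vf} P_v$. By the description \eqref{eq:localization}, the kernel of $\varphi$ consists of those $\gamma \in \Gamma$ that lie in $P_v^{(e_v)}$ for every $v \in \Vf$, which is exactly $\Gamma_I$. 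Thus $\Gamma/\Gamma_I$ injects into $\prod_{v\in\Vf} P_v/P_v^{(e_v)}$.

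For surjectivity, I would fix an arbitrary element $(g_v P_v^{(e_v)})_{v}$ of the target, with $g_v \in P_v$ (and $g_v = 1$ for $v$ such that $e_v=0$). The subgroup $U = \prod_{v | I} g_v P_v^{(e_v)} \times \prod_{v \nmid I} P_v$ is an open subset of $\prod'_{v \in \Vf} \G(k_v)$, because each $P_v^{(e_v)}$ is an open subgroup of $\G(k_v)$ and the $P_v$ form the defining compact open subgroups of the restricted product. By strong approximation, there exists $\gamma \in \G(k) \cap U$. Such a $\gamma$ belongs to $\G(k) \cap \prod_{v \in \Vf} P_v = \Gamma$ and satisfies $\varphi(\gamma) = (g_v P_v^{(e_v)})_v$ by construction. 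Hence $\varphi$ is surjective, and the first isomorphism theorem yields the claimed equality of indices.

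The main potential obstacle is the honest verification of the strong approximation hypotheses in our specific setting (simple connectedness of $\G$ and the appropriate non-compactness at infinity) and the care needed in selecting the open neighbourhood $U$ so that a single element of $\G(k)$ simultaneously matches the prescribed cosets at the finitely many ramified places and stays integral at all other finite places. Both points, however, are standard and already implicit in the admissibility discussion of Section~\ref{sub:admissible_groups}.
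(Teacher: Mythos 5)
Your proof is correct and is essentially the same as the paper's: the key ingredient in both is strong approximation for the simply connected group $\G$. The paper phrases it compactly via the bijection $A/(A\cap B)\leftrightarrow AB/B$ with $A=\Gamma$, $B=\prod_v P_v^{(e_v)}$, while you unpack the same content by exhibiting the reduction homomorphism $\varphi$ and verifying that its kernel is $\Gamma_I$ and that strong approximation forces surjectivity.
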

\begin{proof}
   For two subgroups $A, B$ of a common group there is a bijection between $A/A\cap B$ and $A B/B$. 
   The result follows with $A = \Gamma = \G(k) \cap \prod P_v$ and $B = \prod P_v^{(e_v)}$, noting that $\G(k) B$ is 
   the whole adelic group $\G(\mathbb{A}_\mathrm{f})$ by strong approximation. 
\end{proof}

\begin{lemma} \label{lem:index-hyperspecial} Assume that $P_v = \G^L(\o_v)$ is parahoric hyperspecial. Then 
        \begin{align*}
                [P_v:P_v^{(e)}] &= q_v^{e (n+1) (2n+3)} \prod_{j=1}^{n+1} \left(1 -
                        \frac{1}{q_v^{2j}}
                \right), 
        \end{align*}
        where $q_v$ denotes the order of the residue field $\mathfrak{f}_v = \o_v/\pi_v$.
\end{lemma}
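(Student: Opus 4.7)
The plan is to reduce the computation to the split symplectic group over $\o_v$ and then count points by the standard filtration argument on smooth affine group schemes.

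First, I would argue that the hyperspecial assumption forces $\G^L$ to be a Chevalley model of $\Sp_{2(n+1)}$ over $\o_v$. Hyperspeciality means exactly that $\G^L$ is a smooth reductive group scheme over $\o_v$ whose connected fibres have absolute type $C_{n+1}$. Over the finite residue field $\mathfrak{f}_v$ every connected reductive group is quasi-split by Steinberg's theorem, and since the Dynkin diagram $C_{n+1}$ has no non-trivial automorphism, quasi-split coincides with split here; hence the special fibre is the Chevalley group $\Sp_{2(n+1)}$ over $\mathfrak{f}_v$. By the uniqueness part of the structure theorem for reductive group schemes, the lift of a split group to $\o_v$ is unique, so $\G^L \cong \Sp_{2(n+1)}$ as group schemes over $\o_v$. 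In the description $\G = \U(V, h)$, this merely records the fact that $D$ is split at such primes $v$.

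Second, smoothness of $\G^L$ over $\o_v$ implies, via Hensel's lemma applied inductively, that the reduction map $\G^L(\o_v) \to \G^L(\o_v/\pi_v^e)$ is surjective. Hence $[P_v:P_v^{(e)}] = |\G^L(\o_v/\pi_v^e)|$, and the problem is reduced to a finite count. I would then run the standard filtration: for each $1 \leq j < e$, the kernel of the further reduction $\G^L(\o_v/\pi_v^{j+1}) \to \G^L(\o_v/\pi_v^j)$ can be identified, through $I + \pi_v^j X \mapsto X \bmod \pi_v$, with the additive group of the Lie algebra $\mathfrak{sp}_{2(n+1)}(\mathfrak{f}_v)$, and therefore has cardinality $q_v^{\dim \G}$. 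Since groups of type $C_{n+1}$ have dimension $(n+1)(2n+3)$, iterating yields
\[
|\G^L(\o_v/\pi_v^e)| = q_v^{(e-1)(n+1)(2n+3)}\, |\Sp_{2(n+1)}(\mathfrak{f}_v)|.
\]

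Finally, plugging in the classical order formula $|\Sp_{2m}(\F_q)| = q^{m^2}\prod_{j=1}^m(q^{2j}-1)$ with $m=n+1$ and pulling out the factors $q_v^{2j}$ to combine with the exponent $(n+1)^2$ into $(n+1)(2n+3)$ produces exactly the claimed expression. The only non-routine ingredient is the identification in step one, which is precisely what the hyperspecial assumption is designed to guarantee; the remaining steps are entirely mechanical.
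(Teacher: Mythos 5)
Your proof is correct and follows essentially the same route as the paper: hyperspeciality forces $\G^L_{\o_v}$ to be the Chevalley group scheme of type $\mathrm{C}_{n+1}$, smoothness gives surjectivity of the reduction maps, the filtration on the congruence kernel contributes $q_v^{(e-1)\dim\G}$, and the order of $\Sp_{2(n+1)}(\mathfrak{f}_v)$ supplies the remaining factor. The only cosmetic difference is that you spell out the Lang--Steinberg argument for splitness and invoke the classical formula for $|\Sp_{2m}(\F_q)|$ directly, whereas the paper cites Tits and Ono's table for these facts.
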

\begin{proof}
        If $\G^L(\o_v)$ is hyperspecial then $\G_{k_v}$ must be split, and $\G^L_{\o_v}$ is the
        Chevalley group scheme of type $\mathrm{C}_{n+1}$ (see \cite[Sect.~3.4.2]{Tits}). In
        particular $\G^L_{\o_v}$ is smooth, and the reduction map $P_v \to \G^L(\o_v/\pi_v^e)$ is
        surjective. In case $e = 1$ the index is thus given by the order of $\G^L(\mathfrak{f}_v)$,
        which can be found for instance in \cite[Table 1]{Ono}. For $e>1$ this order must be
        multiplied by the order of $\ker(\G^L(\o_v/\pi_v^e) \to \G^L(\mathfrak{f}_v))$, which by
        induction over $e$ equals  $q_v^{(e-1) \dim(\G)}$. For the type $\mathrm{C}_{n+1}$ we have
        $\dim(\G) = (n+1)(2n+3)$, and the result follows. 
\end{proof}

\subsection{An upper bound for the index}%
\label{sub:an_upper_bound_for_the_index}

We keep the notation introduced above. We denote by $\N(I)$ the norm of an ideal $I \subset \O_k$,
i.e., the order of $\O_k/I$.

\begin{proposition} \label{prop:bound-index}
        Let $\Gamma = \G^L(\O_k)$.  There exists a finite set $S \subset \Vf$ such
        that for any ideal $I \subset \O_k$ prime to $S$ the following holds:
        \begin{align*}
                [\Gamma: \Gamma_I] &\le \N(I)^{(n+1)(2n+3)}. 
        \end{align*}
\end{proposition}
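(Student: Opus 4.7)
The plan is to assemble the bound directly from the two preceding lemmas. The key observation is that Lemma~\ref{lem:index-str-ap} splits the global index as a product over finite places, and Lemma~\ref{lem:index-hyperspecial} controls each local factor at hyperspecial parahorics, so we just need to choose $S$ to absorb the remaining (finitely many) non-hyperspecial places.

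Concretely, I would proceed as follows. First, define $S \subset \Vf$ to be the finite set of finite places $v$ at which $P_v = \G^L(\o_v)$ fails to be a hyperspecial parahoric subgroup of $\G(k_v)$; finiteness of $S$ was recalled in Section~\ref{sub:localizations}. Next, fix an ideal $I \subset \O_k$ prime to $S$ with factorization $I = \prod_{v \in \Vf} \p_v^{e_v}$, so that $e_v = 0$ whenever $v \in S$. For those $v \in S$ the local factor $[P_v : P_v^{(0)}] = 1$ contributes nothing, and for the remaining $v \notin S$ the parahoric is hyperspecial, so Lemma~\ref{lem:index-hyperspecial} applies.

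Then Lemma~\ref{lem:index-str-ap} together with Lemma~\ref{lem:index-hyperspecial} yields
\begin{equation*}
        [\Gamma : \Gamma_I] = \prod_{v \notin S} q_v^{e_v (n+1)(2n+3)} \prod_{j=1}^{n+1}\left(1 - \frac{1}{q_v^{2j}}\right).
\end{equation*}
Since each Euler-like factor $\prod_{j=1}^{n+1}(1 - q_v^{-2j})$ is at most $1$, and since $\N(I) = \prod_{v \in \Vf} q_v^{e_v} = \prod_{v \notin S} q_v^{e_v}$ (using $e_v = 0$ on $S$), one obtains
\begin{equation*}
        [\Gamma : \Gamma_I] \le \prod_{v \notin S} q_v^{e_v (n+1)(2n+3)} = \N(I)^{(n+1)(2n+3)},
\end{equation*}
which is the claimed bound.

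There is no real obstacle here; the proposition is essentially a bookkeeping consequence of the already-established local index formula. The only subtle point is to be sure that the set of places where $P_v$ fails to be hyperspecial is indeed finite (so that $S$ is a legitimate finite exceptional set), which is guaranteed by the standard fact recalled from \cite[Sect.~3.9.1]{Tits}. Everything else reduces to the trivial estimate $\prod_j (1 - q_v^{-2j}) \le 1$.
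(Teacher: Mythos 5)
Your proposal is correct and follows essentially the same route as the paper: define $S$ as the finite set of places where $\G^L(\o_v)$ is not hyperspecial, combine Lemma~\ref{lem:index-str-ap} with Lemma~\ref{lem:index-hyperspecial}, and drop the Euler factors. One small imprecision: your displayed equality $[\Gamma:\Gamma_I]=\prod_{v\notin S} q_v^{e_v(n+1)(2n+3)}\prod_{j=1}^{n+1}(1-q_v^{-2j})$ is not literally correct, since Lemma~\ref{lem:index-hyperspecial} applies for $e\ge 1$ while $[P_v:P_v^{(0)}]=1$ (otherwise the infinite product of Euler factors over all unramified $v$ would vanish); restricting the Euler factors to the finitely many $v$ with $e_v>0$, or passing directly to the inequality as the paper does, fixes this and the bound goes through unchanged.
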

\begin{proof}
        Let $S$ be the set of places $v$ such that $P_v = \G^L(\o_v)$ is not hyperspecial. 
        Let $I = \prod_{v \in \Vf} \p_v^{e_v}$ be any ideal with $e_v = 0$ for each $v \in S$. 
        From \eqref{eq:localization}  and Lemma~\ref{lem:index-hyperspecial} we 
        obtain 
        \begin{align*}
                [\Gamma:\Gamma_I] &= \prod_{v \in \Vf} [P_v:P_v^{(e_v)}]\\ 
                                  &\le \prod_{v \in \Vf} q_v^{e_v (n+1) (2n+3)}\nonumber.
        \end{align*}
        But the latter equals $\N(I)^{(n+1)(2n+3)}$ since $q_v = \N(\p_v)$.
\end{proof}

\begin{remark} \label{rmk:bad-places}
        Proposition \ref{prop:bound-index} holds for any arithmetic subgroup of $\G(k)$. In the case
        $\Gamma = \G^L(\O_k)$ we can have some control on the set $S$. Let us assume that $L =
        \O_D^{n+1}$ with $\O_D$ a \emph{maximal} order, and consider the integral form $h = h_a$
        given in \eqref{hermitianform}. Then it follows from \cite[Lemmas 5.1 and 5.5]{EK18} that
        $S$ can be taken to be the set of places $v$ where either
        \begin{enumerate}
                \item $D_v$ ramifies;
                \item or $\p_v$ divides the coefficient $a$.
        \end{enumerate}
\end{remark}

\section{Bounding the systole from below}
\label{sec:lowerbound}

This section deals with the computations that provide a lower bound for the trace in a congruence
subgroup. This material is then used in Section~\ref{sub:the_proof_of_theorem_main} for bounding
the systole of the corresponding manifolds, in particular for proving Theorem~\ref{main}.

We essentially keep the notation of the preceding section:
$\Gamma$ will denote the arithmetic subgroup $\textbf{G}^{L}(\mathcal{O}_k)$, where $\textbf{G}=\textbf{U}(V,h)$ 
is an admissible $k$-group with $h = h_a$ as in \eqref{hermitianform}. It will be important to work with
the matrix representation with coefficients in $D$ (the quaternion algebra over $k$). That is, we
embed $\Gamma$ as a subgroup of
\begin{align*}
        \G(k)&=\{ C \in \GL_{n+1}(D)\mid C^{*}JC=J\}, 
\end{align*}
where $J = \mathrm{diag}(-a, 1, \dots, 1)$. In particular the trace $\tr(C)$ has the same meaning as
in Section \ref{Eigenvalues and translation length}. In accordance with the notation of Sections
\ref{Quaternionic hyperbolic $n$-space}--\ref{Eigenvalues and translation length}, we use the
convention that the rows (resp.\ columns) of the matrices are indexed from $0$ to $n$.

\subsection{Two lemmas}%
\label{sub:two_lemmas}

Recall that we have fixed an embedding $k \subset \RR$, which we refer to as the {\em trivial} embedding
(or trivial archimedean place). The symbol $I_{n+1}$ denotes the identity matrix in $\GL_{n+1}(D)$.




\begin{lemma}\label{trazanon}
        Assume that $k \neq \Q$. For any $C\in\Gamma$ different from $\pm I_{n+1}$, we have $\lvert\R(\tr(C))\rvert\neq n+1.$ 
\end{lemma}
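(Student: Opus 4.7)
My plan is to proceed by contradiction: assume $C \in \Gamma$ satisfies $|\R(\tr(C))| = n+1$ and show that this forces $C = \pm I_{n+1}$. The crucial leverage provided by the hypothesis $k \neq \Q$ is the existence of a nontrivial real embedding $\sigma \colon k \to \RR$, at which the admissible group $\G$ produces a compact factor; without such a place, the argument breaks and indeed parabolic elements would be counterexamples.

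First I would observe that $\pm(n+1)$ is a rational integer, hence fixed by every embedding of $k$, so $\R(\tr(\sigma(C))) = \sigma(\R(\tr(C))) = \pm(n+1)$ as well. By the construction of $a$ in Proposition~\ref{lem:admissible-a}, $\sigma(a) < 0$ for nontrivial $\sigma$, making the form $h_a^\sigma$ positive definite and $\G^\sigma(\RR)$ a compact factor of $\G(k \otimes_\Q \RR)$. Setting $P = \mathrm{diag}(\sqrt{-\sigma(a)}, 1, \ldots, 1)$ one checks that $J^\sigma = P^*P$ and hence $P\sigma(C)P^{-1}$ is unitary in the sense $A^*A = I$. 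Corollary~\ref{unitary} then gives that every eigenvalue of $\sigma(C)$ has absolute value $1$.

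Denote the eigenvalues of $\sigma(C)$ by $t_1, \overline{t_1}, \ldots, t_{n+1}, \overline{t_{n+1}}$. Corollary~\ref{traceeigenvalues} combined with the above yields
\begin{equation*}
        n+1 = \bigl|\R(t_1) + \cdots + \R(t_{n+1})\bigr| \leq \sum_{j=1}^{n+1} |\R(t_j)| \leq \sum_{j=1}^{n+1} |t_j| = n+1,
\end{equation*}
so equality holds throughout, which forces $\R(t_j) = \epsilon$ for some common sign $\epsilon \in \{\pm 1\}$ and, combined with $|t_j|=1$, yields $t_j = \epsilon$ for every $j$. Through the Lee isomorphism $f$ of Theorem~\ref{iso }, $f(\sigma(C))$ is then a unitary complex matrix of size $2(n+1)$ whose every eigenvalue equals $\epsilon$, forcing $f(\sigma(C)) = \epsilon I$ and therefore $\sigma(C) = \epsilon I_{n+1}$.

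It remains to descend this identity from $D_\sigma$ back to $D$. The extension of scalars $D \to D \otimes_k k_\sigma = D_\sigma$ is injective because $D$ is a four-dimensional $k$-vector space and the embedding $k \hookrightarrow k_\sigma$ is injective; applied entrywise, the induced map $M_{n+1}(D) \to M_{n+1}(D_\sigma)$ is also injective. Since $\sigma(C - \epsilon I_{n+1}) = 0$, we conclude $C = \epsilon I_{n+1} = \pm I_{n+1}$, contradicting the assumption. The only delicate step is the eigenvalue/trace computation of Step~2: one must check carefully that the conjugation by $P$ really does reduce matters to the setting of Corollary~\ref{unitary} and that the saturation of the triangle inequality pins down the eigenvalues completely, but this is straightforward once assembled.
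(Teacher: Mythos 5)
Your proof is correct and follows essentially the same route as the paper's: pass to a nontrivial archimedean place $\sigma$, where admissibility makes $\G^\sigma(\RR)$ compact, deduce via Corollary~\ref{unitary} that all eigenvalues of $\sigma(C)$ have norm one, and let the trace condition force all eigenvalues to a common $\pm 1$, hence $\sigma(C)=\pm I_{n+1}$ and then $C=\pm I_{n+1}$. You in fact supply some details the paper leaves implicit (the explicit conjugation by $P=\mathrm{diag}(\sqrt{-\sigma(a)},1,\dots,1)$ reducing to the setting of Corollary~\ref{unitary}, the saturation of the triangle inequality, and the injectivity of $M_{n+1}(D)\to M_{n+1}(D_\sigma)$); the one tiny imprecision is that it is $f(P\sigma(C)P^{-1})$, not $f(\sigma(C))$ itself, that is unitary, but since eigenvalues and diagonalizability are invariant under conjugation the argument still closes.
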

\begin{proof}
Suppose that $\lvert\R(\tr(C))\rvert= n+1$.
Since $k \neq \Q$ there exists a nontrivial embedding  $\sigma :k\rightarrow\mathbb{R}$, for which
$\lvert\R(\tr(C^{\sigma}))\rvert=n+1$.
By the admissibility condition we have   $C^{\sigma} \in \Sp(n+1)$, so that $C^{\sigma}$ is unitary (in the quaternionic sense). 
Corollary \ref{unitary} implies that the eigenvalues of $C^{\sigma}$ are all complex numbers of norm equal to one. 
With  $\lvert\R(\tr(C^{\sigma}))\rvert=n+1$ it follows that these  eigenvalues are either all equal
to $1$, or all equal to $-1$.  Since unitary matrices are diagonalizable we obtain 
$C^{\sigma}=\pm I_{n+1}$, so that $C = \pm I_{n+1}$.
\end{proof}

\begin{lemma} \label{lem:2}
        Let $C=(c_{ij})$ be an element in $\G(k)$ and write $c_{ii}=1+y_{i}$.  For every nontrivial
        embedding $\sigma: k \to \RR$  and each $i=0,\dots, n$ we have $\lvert\sigma(|c_{ii}|^{2})\rvert\leq 1$ and 
        $\lvert\sigma(\R(y_{i}))\rvert\leq 2.$
\end{lemma}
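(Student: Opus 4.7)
The plan is to exploit the admissibility of $\G$, which guarantees that for any nontrivial embedding $\sigma:k\to\RR$ the matrix $C^\sigma$ lies in a compact unitary group. Concretely, by the construction in the proof of Proposition~\ref{lem:admissible-a} we have $\sigma(a)<0$, so $J^\sigma=\mathrm{diag}(-\sigma(a),1,\ldots,1)$ is positive definite, and applying $\sigma$ to the defining relation $C^{*}JC=J$ yields $(C^\sigma)^{*}J^\sigma C^\sigma=J^\sigma$, which expresses that $C^\sigma$ preserves a positive-definite hermitian form on $\H^{n+1}$.

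First I would reduce this twisted unitarity to the standard one of Section~\ref{sub:matrices_over_H}. Setting $P=\mathrm{diag}(\sqrt{-\sigma(a)},1,\ldots,1)$ one has $J^\sigma=P^{*}P$, and therefore $\tilde C:=PC^\sigma P^{-1}$ satisfies $\tilde C^{*}\tilde C=I_{n+1}$. Since the diagonal entries of $P$ are real (hence central in $\H$), conjugation by $P$ fixes the diagonal of $C^\sigma$, so $\tilde c_{ii}=c^\sigma_{ii}$ for every $i=0,\ldots,n$.

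The first inequality then falls out of unitarity: the column-wise relation $\sum_{k=0}^{n}|\tilde c_{ki}|^{2}=1$ coming from $\tilde C^{*}\tilde C=I_{n+1}$ (Corollary~\ref{unitary}-style reasoning) forces $|c^\sigma_{ii}|^{2}=|\tilde c_{ii}|^{2}\leq 1$. Since $|c_{ii}|^{2}=c_{ii}\overline{c_{ii}}$ is a nonnegative element of $k$ and $\sigma(|c_{ii}|^{2})=|c^\sigma_{ii}|^{2}$, we obtain $|\sigma(|c_{ii}|^{2})|\leq 1$.

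For the second inequality, I would use $\sigma(\R(y_i))=\R(c^\sigma_{ii})-1$, together with the elementary bound $|\R(q)|\leq|q|$ for any quaternion $q$; combined with the first inequality this gives $|\R(c^\sigma_{ii})|\leq|c^\sigma_{ii}|\leq 1$, and the triangle inequality yields $|\sigma(\R(y_i))|\leq 2$. The one step that genuinely needs care is the factorization $J^\sigma=P^{*}P$ with $P$ scalar-diagonal and real, because only then does conjugation by $P$ preserve diagonal quaternionic entries exactly; once that observation is in place the remaining manipulations are book-keeping.
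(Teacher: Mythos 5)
Your proof is correct, and it tracks the same underlying idea as the paper's (admissibility forces $C^\sigma$ into a compact unitary group at each nontrivial archimedean place, which bounds the diagonal entries), but the implementation differs at two points and is cleaner at one of them. For the first inequality, the paper stays over $k$ and reads the column relations $-a|c_{0j}|^2+\sum_{i\geq 1}|c_{ij}|^2=1$ (and the analogous one for $j=0$) directly from $C^*JC=J$, then applies $\sigma$ and uses $\sigma(a)<0$; you instead pass to $C^\sigma$ over $\H$, normalize the positive-definite form $J^\sigma$ to the standard one via $P=\mathrm{diag}(\sqrt{-\sigma(a)},1,\ldots,1)$, and invoke unit-length columns of a unitary matrix. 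Both are fine; your conjugation step is a minor detour (you could equally read off the $(i,i)$ entry of $(C^\sigma)^*J^\sigma C^\sigma=J^\sigma$ directly) but, as you note, the fact that $P$ is real diagonal, hence central in $\H$, is exactly what makes $\tilde c_{ii}=c^\sigma_{ii}$, so the bookkeeping is legitimate. For the second inequality your route is genuinely more economical: you use $|\R(q)|\leq|q|$ in $\H$ together with the base-change identity $\sigma(\R(c_{ii}))=\R(c^\sigma_{ii})$, whereas the paper expands the reduced norm $|c|^2=x_0^2-\delta x_1^2-\gamma x_2^2+\delta\gamma x_3^2$ in $D=\left(\frac{\delta,\gamma}{k}\right)$ and does a sign analysis of $\sigma(\delta),\sigma(\gamma)$ to conclude $\sigma(|c|^2)\geq\sigma(\R(c))^2$. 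These are the same fact, but your formulation avoids picking coordinates on $D$.
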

\begin{proof}
The equation $C^{*}JC=J$ implies that the columns of $C$ satisfy the equations
\begin{equation}\label{equations on the matrix 3}
-a|c_{00}|^{2}+\sum_{i=1}^{n}|c_{i0}|^{2} = -a,
\end{equation}
\begin{equation}\label{equations on the matrix 2}
-a|c_{0j}|^{2}+\sum_{i=1}^{n}|c_{ij}|^{2}=1,
\mbox{ for }j=1,\ldots ,n.
\end{equation}
where $|x|^{2}=x\overline{x}$ denotes the quaternion norm of $x$ in $D$. Since all the coefficients 
$c_{ij}$ lie in $D$, the norm $|c_{ij}|^{2}$ is an element of $k.$ Let $\sigma:k\rightarrow\mathbb{R}$ 
be a nontrivial embedding. Applying $\sigma$ to \eqref{equations on the matrix 3} we obtain 
\begin{align*}
-\sigma(a)\sigma(|c_{00}|^{2}) &\leq -\sigma(a)\sigma(|c_{00}|^{2})+
\sum_{i=1}^{n}\sigma(|c_{i0}|^{2})\\ & = -\sigma(a).
\end{align*}
Hence $\lvert\sigma(|c_{00}|^{2})\rvert\leq 1$. Similarly, applying $\sigma$ to \eqref{equations on
the matrix 2} we obtain that $\lvert\sigma(|c_{ii}|^{2})\rvert\leq 1$  for $i=1,\ldots,n$. Now, if
$D=\left(\frac{\delta,\gamma}{k}\right)$ and $c=x_{0} + x_{1}i +x_{2}j +x_{3}ij$, then
$|c|^{2}=x_{0}^{2}-\delta x_{1}^ {2}-\gamma x_{2}^{2}+\delta\gamma x_{3}^{2}$. Since $D^{\sigma}$ is
a division algebra we have $\sigma(\delta)<0$ and $\sigma(\gamma)<0$, and thus
\begin{align*}
    \sigma(|c|^{2})&=\sigma(x_{0})^{2}-\sigma(\delta)\sigma(x_{1})^{2}-\sigma(\gamma)\sigma(x_{2})^{2}+\sigma(\delta)\sigma(\gamma)\sigma(x_{3})^{2}\\
    &\geq\sigma(x_{0})^{2}\\
    &=\sigma(\R(c))^2.
\end{align*}
In particular $\sigma(|c_{ii}|^{2})\leq 1$ implies $|\sigma(\R(c_{ii}))|\leq 1$, from which one
deduces $|\sigma(\R(y_{i})|\leq 2$.
\end{proof}

\subsection{Bounding the trace}%
\label{sub:bounding_the_trace}
We want to bound the trace of a congruence subgroup $\Gamma_I$, for $I \subset \O_k$ some ideal. In
the matrix representation we have the following description:  
$$\Gamma_{I}=\{\left(c_{ij}\right)\in\Gamma\mid c_{ii}-1\in I\mathcal{O}_{D}, c_{ij}\in I\mathcal{O}_{D}\hspace{1mm}\mbox{for}\hspace{1mm} i\neq j\}.$$
We recall that the element $a \in \O_k$ appears (with negative sign) as the unique nontrivial coefficient
of the hermitian form $h = h_a$ that determines $\G$.

\begin{lemma}\label{estimative of sigma}
        Let  $C\in\Gamma_{I}$, and write $c_{ii} = 1 + y_i$. Then
        \begin{equation}
                \label{eq:2a}
        2a\sum_{i=0}^{n}\R(y_{i})\in I^{2}.
        \end{equation}
\end{lemma}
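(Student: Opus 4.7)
The plan is to extract the needed identity directly from the diagonal entries of the matrix equation $C^*JC=J$. Writing out the $(0,0)$ entry of this identity (using $J=\mathrm{diag}(-a,1,\dots,1)$) reproduces equation \eqref{equations on the matrix 3}, and writing out the $(i,i)$ entry for $i\geq 1$ reproduces \eqref{equations on the matrix 2} with $j=i$. Substituting $c_{ii}=1+y_{i}$ and expanding
\begin{equation*}
  |c_{ii}|^{2} = (1+y_i)(1+\overline{y_i}) = 1 + 2\R(y_i) + |y_i|^2
\end{equation*}
turns the $(0,0)$ equation into a formula for $2a\R(y_0)$, and the $(i,i)$ equation for $i\geq 1$ into a formula for $2\R(y_i)$. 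Multiplying the latter by $a$ and adding all $n+1$ identities expresses $2a\sum_{i=0}^{n}\R(y_{i})$ as a sum of terms of the form $c\cdot|x|^{2}$, where each $c\in\O_k$ (either $\pm a$, or $a^{2}$) and each $x$ is either one of the $y_{i}$ or an off-diagonal entry $c_{kl}$ of $C$.

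To conclude it then suffices to observe that, by definition of $\Gamma_I$, every $y_i$ and every off-diagonal $c_{kl}$ lies in $I\O_D$, and that for any $x\in I\O_D$ the reduced norm $|x|^{2}=x\overline{x}$ belongs to $I^{2}$. The latter follows because $I$ lies in the center $\O_k$ of $\O_D$, so $I\O_D$ is a two-sided ideal of $\O_D$ with $(I\O_D)(I\O_D)=I^{2}\O_D$; combining this with the equality $I^{2}\O_D\cap \O_k = I^{2}$ (valid since $\O_D$ is a locally free $\O_k$-module) gives the claim. Each term on the right-hand side of the identity obtained above therefore sits in $\O_k\cdot I^{2}=I^{2}$, which yields \eqref{eq:2a}.

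The computation itself is entirely routine; the only non-trivial point is the containment $|x|^{2}\in I^{2}$ for $x\in I\O_D$, which is the step one has to state carefully. Everything else is linear algebra inside the matrix identity $C^{*}JC=J$, organized to make the coefficient $2a$ appear uniformly in front of $\R(y_i)$.
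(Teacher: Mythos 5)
Your argument is correct and follows essentially the same route as the paper: the diagonal entries of $C^*JC=J$ are exactly the column relations \eqref{equations on the matrix 3} and \eqref{equations on the matrix 2}, into which the paper likewise substitutes $c_{ii}=1+y_i$ and then uses that quaternion norms of elements of $I\O_D$ lie in $I^{2}$, multiplying the $i\geq 1$ relations by $a\in\O_k$ before summing. Your only addition is the explicit justification of the containment $|x|^{2}\in I^{2}$ for $x\in I\O_D$ (which the paper uses implicitly), and the minor slip in your parenthetical list of coefficients (the terms $|c_{i0}|^{2}$ enter with coefficient $1$, not $\pm a$ or $a^{2}$) is harmless since all coefficients lie in $\O_k$.
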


%

\begin{proof}
We first replace $c_{00}=1+y_{0}$ in \eqref{equations on the matrix 3} to obtain
\begin{equation}\label{moduloI}
-a(2\R(y_{0})+|y_{0}|^{2}) +\sum_{i=1}^{n}|c_{i0}|^{2} = 0.
\end{equation}
For $C\in\Gamma_{I}$ we have  $y_{0}\in I\mathcal{O}_{D}$ and $c_{i0}\in I\mathcal{O}_{D}$ for
$i>0$. From \eqref{moduloI} it follows that $2a\R(y_{0})\in I^{2}$. By replacing
$c_{ii}=1+y_{i}$ in \eqref{equations on the matrix 2} the same argument shows that $2\R(y_{i})\in
I^{2}$ for $i>0$. Since $a \in \O_k$ we have that $2a\R(y_i) \in I^2$ for all $i = 0,\dots, n$, and
thus the same holds for their sum.
\end{proof}

In the following the symbol $\N(\cdot)$ denotes either the norm $\N_{k/\Q}(\cdot)$ for elements of $k$, or
the norm of ideals in $\O_k$. Recall that for a principal ideal $I = (\alpha)$, one has $\N(I) =
|\N(\alpha)|$ unless $\alpha = 0$.
\begin{corollary} \label{cor:bd-norm}
        Let $\Gamma_I$ be defined over the number field $k$ of degree $d>1$, and let $C \in
        \Gamma_I$ different from $\pm I_{n+1}$.   Then 
        \begin{align*}
                \left| \N\left(\sum_{i=0}^{n}\R(y_{i})\right)\right| &\geq \frac{\N(I)^{2}}{2^{d}\N(a)},
        \end{align*}
        where $c_{ii} = 1+y_i$.
\end{corollary}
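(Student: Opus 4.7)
The plan is to combine Lemma~\ref{estimative of sigma} (which provides integrality in $I^2$) with Lemma~\ref{trazanon} (which prevents a degenerate vanishing), and then to divide the resulting norm inequality by $\N(2a)$. Set $s = \sum_{i=0}^n \R(y_i)$, which is an element of $k$ since $D$ is a quaternion $k$-algebra whose standard involution has trace in $k$. Then $\alpha := 2a s$ lies in $I^2 \subset \O_k$ by Lemma~\ref{estimative of sigma}.

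The first real step is to rule out $s = 0$. For this I would compute the trace directly: $\tr(C) = \sum_i c_{ii} = (n+1) + \sum_i y_i$, so $\R(\tr(C)) = n+1+s$. If $s$ were zero then $\lvert \R(\tr(C)) \rvert = n+1$, contradicting Lemma~\ref{trazanon} (whose hypothesis $k \neq \Q$ is exactly the assumption $d > 1$ of the corollary, and whose exclusion of $\pm I_{n+1}$ matches the hypothesis here). Hence $s \neq 0$, and consequently $\alpha \neq 0$.

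Next I would invoke the standard fact that for any nonzero $\alpha$ belonging to an ideal $J \subset \O_k$ one has $\N(J) \mid \lvert \N_{k/\Q}(\alpha) \rvert$, applied to $J = I^2$. This yields $\lvert \N(\alpha) \rvert \geq \N(I^2) = \N(I)^2$. Finally, since $\N$ is multiplicative on $k^\times$ and $\N(2) = 2^d$ because $[k:\Q] = d$, we have $\lvert \N(\alpha) \rvert = 2^d \N(a) \, \lvert \N(s) \rvert$, so
\begin{equation*}
  \lvert \N(s) \rvert \;=\; \frac{\lvert \N(\alpha) \rvert}{2^d \N(a)} \;\geq\; \frac{\N(I)^2}{2^d \N(a)},
\end{equation*}
which is the claim.

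There is no serious obstacle here; the calculation is routine once the two lemmas are in hand. The only subtle point to be careful about is the exclusion $s \neq 0$: it rests on translating the vanishing of $s$ into the trace condition ruled out by Lemma~\ref{trazanon}, and this is exactly why the hypothesis $C \neq \pm I_{n+1}$ and the restriction to $d > 1$ are needed.
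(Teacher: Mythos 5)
Your proposal is correct and follows the same route as the paper's own (extremely terse) proof: use Lemma~\ref{trazanon} to show $s=\sum_i\R(y_i)\neq 0$, then apply the norm to the containment $2as\in I^2$ from Lemma~\ref{estimative of sigma} and divide out $\N(2a)=2^d\N(a)$. You have merely filled in the details the paper leaves implicit, in particular the translation of $s=0$ into $|\R(\tr(C))|=n+1$ and the standard fact that $\N(I^2)$ divides $|\N_{k/\Q}(\alpha)|$ for nonzero $\alpha\in I^2$.
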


\begin{proof}
        We have $\sum_{i=0}^{n}\R(y_{i})\neq 0$ by Lemma \ref{trazanon}. 
        The result follows then immediately by applying $\N(\cdot)$ on \eqref{eq:2a}.
\end{proof}


\begin{proposition}\label{tracenorm1st}
        Let $k$ be of degree $d>1$, and $I \subset \O_k$ be a proper nontrivial ideal.
        For any $C\in\Gamma_{I}$ different from $\pm I_{n+1}$  we have $$\lvert\R(\tr(C))\rvert\geq  \frac{\N\left(I\right)^{2}}{2^{2d-1}(n+1)^{d-1}\N(a)}-n-1.$$
\end{proposition}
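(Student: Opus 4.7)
The plan is to convert the bound on $|\N(s)|$ coming from Corollary \ref{cor:bd-norm} (where $s = \sum_{i=0}^{n}\R(y_i)$) into a lower bound on $|s|$ at the trivial archimedean place. The key observation is that $\R(\tr(C)) = (n+1) + s$, so once we control $|s|$, the conclusion follows from the reverse triangle inequality:
\begin{equation*}
|\R(\tr(C))| = |(n+1) + s| \geq |s| - (n+1).
\end{equation*}

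To get from $|\N(s)|$ to $|s|$, I would factor the norm as $\N(s) = s \cdot \prod_{\sigma \neq \mathrm{id}} \sigma(s)$, where $\sigma$ ranges over the $d-1$ nontrivial real embeddings of $k$. For each such $\sigma$, Lemma \ref{lem:2} gives $|\sigma(\R(y_i))| \leq 2$ for each $i$, and summing over $i = 0, \dots, n$ yields
\begin{equation*}
|\sigma(s)| \leq 2(n+1).
\end{equation*}
Therefore $|\N(s)| \leq |s| \cdot \bigl(2(n+1)\bigr)^{d-1}$, and combined with the lower bound $|\N(s)| \geq \N(I)^2/(2^d \N(a))$ from Corollary \ref{cor:bd-norm}, we obtain
\begin{equation*}
|s| \geq \frac{\N(I)^2}{2^d \N(a) \cdot 2^{d-1}(n+1)^{d-1}} = \frac{\N(I)^2}{2^{2d-1}(n+1)^{d-1}\N(a)}.
\end{equation*}
Plugging this into the reverse triangle inequality above gives exactly the claimed bound.

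There is no real obstacle here; the argument is a clean combination of three ingredients already assembled in the section: the integrality statement of Lemma \ref{estimative of sigma} (packaged as Corollary \ref{cor:bd-norm} for the norm), the archimedean bounds on the conjugates from Lemma \ref{lem:2}, and the elementary identity $\tr(C) = (n+1) + \sum y_i$. The only subtlety is ensuring we use the hypothesis $C \neq \pm I_{n+1}$ together with $k \neq \Q$ (i.e.\ $d>1$) to guarantee $s \neq 0$, so that Corollary \ref{cor:bd-norm} applies, but this is exactly Lemma \ref{trazanon} and is already handled inside the corollary.
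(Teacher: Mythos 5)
Your proposal is correct and matches the paper's argument essentially step for step: factor $\N(s)$ into the trivial and nontrivial conjugates, bound each nontrivial conjugate by $2(n+1)$ via Lemma \ref{lem:2}, combine with Corollary \ref{cor:bd-norm}, and finish with the reverse triangle inequality on $\R(\tr(C)) = (n+1) + s$. Your remark about invoking Lemma \ref{trazanon} (via Corollary \ref{cor:bd-norm}) to ensure $s \neq 0$ is exactly the subtlety the paper handles as well.
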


\begin{proof}
By Lemma \ref{lem:2} we have 
\begin{align*}
\Bigg\lvert \N\left(\sum_{i=0}^{n}\R(y_{i})\right)\Bigg\rvert &=\Bigg\lvert \sum_{i=0}^{n}\R(y_{i})\Bigg\rvert\Bigg\lvert\prod_{\sigma\neq id} \sigma \left(\sum_{i=0}^{n}\R(y_{i})\right) \Bigg\rvert \\
  &\leq\Bigg\lvert\sum_{i=0}^{n}\R(y_{i})\Bigg\rvert\cdot 2^{d-1} (n+1)^{d-1}.    
\end{align*}
With Corollary \ref{cor:bd-norm} we obtain  
$$\Bigg\lvert \sum_{i=0}^{n}\R(y_{i})\Bigg\rvert \geq \frac{\N\left(I\right)^{2}}{2^{2d-1}(n+1)^{d-1}\N(a)}.$$\\
Now, since $\R(\tr(C)) = n+1 + \sum_{i=0}^{n}\R(y_{i})$
we have 
\begin{align*}
\lvert \R(\tr(C))\rvert & \geq \Bigg\lvert \sum_{i=0}^{n}\R(y_{i}) \Bigg\rvert -n-1\\
& \geq  \frac{\N\left(I\right)^{2}}{2^{2d-1}(n+1)^{d-1}\N(a)}-n-1.\label{main computation}
\end{align*}
\end{proof}

\subsection{Bounding the systole}
\label{sub:the_proof_of_theorem_main}
We can now use the preceding results to obtain a lower bound for the systole of $\Gamma_I\backslash\textbf{H}_{\mathbb{H}}^n$ in terms of the norm of the ideal $I$.
\begin{proposition}\label{mainprop}
Let $M=\Gamma\backslash \HH^n$ be a compact arithmetic orbifold with $\Gamma=\mathbf{G}^L(\mathcal{O}_k)$. 
If $\Gamma_I$ is a principal congruence subgroup associated to an ideal $I \subset \O_k$ then
\begin{equation*}
\sys_{1}(M_I)\geq 4\log(\N(I))-c,
\end{equation*}
where $M_I=\Gamma_I\backslash\HH^{n}$ is the associated congruence cover of $M$, and $c$ is a constant independent of $I$. 
\end{proposition}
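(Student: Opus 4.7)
The plan is to combine the two bounds already in hand: Corollary \ref{lenghttrace}, which controls the translation length of a hyperbolic element in terms of $|\R(\tr(C))|$, and Proposition \ref{tracenorm1st}, which in turn controls $|\R(\tr(C))|$ from below in terms of $\N(I)^{2}$. Chaining these should produce an inequality of the shape $e^{\ell_{C}/2}\gtrsim \N(I)^{2}$, which is precisely the logarithmic-in-$\N(I)$ growth with leading constant $4$ that we want.

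First I would observe that the compactness of $M$ forces $k\neq\Q$, hence $d=[k:\Q]>1$, so the hypothesis of Proposition \ref{tracenorm1st} is satisfied for every nontrivial proper ideal $I\subset\O_{k}$. Next, I would pick an element $C\in\Gamma_{I}$ whose translation length realizes $\sys_{1}(M_{I})$; such an element is necessarily hyperbolic (it corresponds to a closed geodesic in $M_{I}$), so it leaves a geodesic of $\HH^{n}$ invariant, and in particular $C\neq\pm I_{n+1}$. Thus both Corollary \ref{lenghttrace} and Proposition \ref{tracenorm1st} apply to $C$.

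Concatenating the two estimates gives
\begin{equation*}
e^{\ell_{C}/2}\;\geq\;\frac{|\R(\tr(C))|}{n+1}\;\geq\;\frac{\N(I)^{2}}{2^{2d-1}(n+1)^{d}\N(a)}\;-\;1.
\end{equation*}
For $\N(I)$ larger than some threshold $N_{0}$ depending only on $n$, $d$, and $\N(a)$, the right-hand side dominates, say, half of the leading term, and taking logarithms yields $\ell_{C}\geq 4\log\N(I)-c_{0}$ for a constant $c_{0}$ independent of $I$. For the finitely many ideals with $\N(I)\leq N_{0}$ one simply enlarges $c$ so that $4\log\N(I)-c\leq 0\leq\sys_{1}(M_{I})$ holds trivially.

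The proof is essentially a bookkeeping exercise once the two preparatory bounds are in place; the only subtle point is to verify that the element $C$ realizing the systole truly sits in $\Gamma_{I}\setminus\{\pm I_{n+1}\}$ and leaves a geodesic invariant, so that both Corollary \ref{lenghttrace} and Proposition \ref{tracenorm1st} can be invoked. I do not expect any substantial obstacle beyond controlling the additive constant when passing from the ``subtract $1$'' form to the final logarithmic bound.
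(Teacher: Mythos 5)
Your proposal is correct and follows essentially the same route as the paper: take an element of $\Gamma_I$ realizing the systole, chain Corollary \ref{lenghttrace} with Proposition \ref{tracenorm1st} (using $k\neq\Q$ from compactness), and absorb the finitely many small-norm ideals into the constant. The numerical bookkeeping, including the bound $e^{\ell_C/2}\geq \N(I)^2/(2^{2d-1}(n+1)^d\N(a))-1$, matches the paper's computation.
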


\begin{proof}
Let $A \in \Gamma_I$ corresponding to a shortest closed geodesic in $M_I$, so that 
its translation length $\ell_{A}$ equals $\sys_{1}(M_{I})$.
By Corollary \ref{lenghttrace} and Proposition \ref{tracenorm1st} we obtain 
\begin{align*}
    \ell_{A}&\geq 2\log\bigg(\frac{\lvert\R(\tr(A))\rvert}{n+1}\bigg)\\
            &\geq 2 \log\bigg(\frac{\N\left(I\right)^{2}}{2^{2d-1}(n+1)^{d}\N(a)}-1\bigg)\\
            &\geq 2 \log\bigg(\frac{\N\left(I\right)^{2}}{2\cdot2^{2d-1}(n+1)^{d}\N(a)}\bigg)\\
            &=4\log(\N(I))-2\log\big(2^{2d}(n+1)^{d}\N(a)\big)
\end{align*}
if $\N(I)^{2}\geq 2^{2d}(n+1)^{d}\N(a).$  Since there exist only finitely many ideals 
$I \subset \O_k$ with bounded norm, the result follows by enlarging the constant $c$ if necessary.\qedhere

\end{proof}

We can now prove the main result.
\begin{proof}[Proof of Theorem \ref{main}]
Let $M = \Gamma\backslash\HH^n$ be a compact quaternionic orbifold. 
Then $\Gamma \subset \G(k)$ for some admissible $k$-group $\G$ with $k \neq \Q$.
On the other hand, by \cite[Prop. 2.2]{LLM17} 
we can replace the study of $\Gamma$ with any subgroup commensurable with it, in particular, we may
assume that $\Gamma=\mathbf{G}^{L}(\mathcal{O}_k)$ as above. 
By Proposition~\ref{prop:bound-index} there exist a finite set $S$ of prime ideals of $\mathcal{O}_{k}$ such that
any ideal $I\subset\mathcal{O}_{k}$ with no prime factors in $S$ satisfies
$$[\Gamma:\Gamma_{I}]\leq\N\left (I\right )^{(n+1)(2n+3)}.$$  
From Proposition \ref{mainprop} we obtain 
\begin{equation*}
    \sys_{1}(M_{I})\geq\frac{4}{(n+1)(2n+3)}\log\big([\Gamma:\Gamma_{I}]\big)-c,
\end{equation*}
for some constant $c$ independent of $I$. 
The result then follows with the equality $\vol(M_I)=\vol(M) [\Gamma:\Gamma_I]$.
\end{proof}

\section{Optimality of the constant}\label{optimal}

In this section we show that the constant $\frac{4}{(n+1)(2n+3)}$ is sharp, using 
similar arguments as in the Appendix of \cite{Murillo}. The precise result is the following.

\begin{theorem}\label{thm:optimal}
Let $k \subset \RR$ be totally real with $k \neq \Q$, and let $\G$ be an admissible $k$-group, so
that $\G(\RR) = \Sp(n,1)$. Then there exists an arithmetic subgroup $\Gamma \subset \G(k)$  
such that for any sequence of prime ideals $\mathfrak{p}\subset \mathcal{O}_{k}$ 
the principal congruence
subgroups $\Gamma_{\mathfrak{p}}$ satisfy
\begin{equation*}
    \sys_{1}(M_{\mathfrak{p}})\leq\frac{4}{(n+1)(2n+3)}\log\big(\vol(M_{\mathfrak{p}})\big)+d',
\end{equation*}
where $M_{\mathfrak{p}}=\Gamma_{\mathfrak{p}}\backslash\HH^n$ and $d'$ is a constant independent of $\mathfrak{p}$.
\end{theorem}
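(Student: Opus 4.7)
The plan is to produce, for each prime ideal $\mathfrak{p}\subset\O_k$, a hyperbolic element $A_{\mathfrak{p}} \in \Gamma_{\mathfrak{p}} \setminus \{\pm I_{n+1}\}$ whose translation length is at most $4\log\N(\mathfrak{p}) + O(1)$. Combined with the estimate $\log\vol(M_{\mathfrak{p}}) \ge (n+1)(2n+3)\log\N(\mathfrak{p}) + O(1)$, which follows from Lemma~\ref{lem:index-hyperspecial} for $\mathfrak{p}$ outside a finite set of bad places, this will immediately yield $\sys_1(M_{\mathfrak{p}}) \le \ell_{A_{\mathfrak{p}}} \le \tfrac{4}{(n+1)(2n+3)}\log\vol(M_{\mathfrak{p}}) + d'$, which is exactly the desired inequality.

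The construction of $A_{\mathfrak{p}}$ will take place inside a totally geodesic $\HH^1 \subset \HH^n$. By Proposition~\ref{lem:admissible-a}, I may write $\G=\U(V,h_a)$ with $V = D^{n+1}$ and $h_a$ as in~\eqref{hermitianform}, and choose $\Gamma = \G^L(\O_k)$ with $L=\O_D^{n+1}$. Let $V_0 \subset V$ be the $D$-submodule spanned by $\textbf{e}_0$ and $\textbf{e}_1$, and define the $k$-subgroup $\G_0 = \U(V_0, h_a|_{V_0}) \subset \G$ that acts trivially on $V_0^\perp$. Because $a$ is positive at the trivial archimedean place of $k$ and totally negative elsewhere, $\G_0(\RR)\cong\Sp(1,1)$ at the trivial place and is compact at the other archimedean places, so $\G_0$ is itself an admissible $k$-group. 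Its symmetric space is a totally geodesic $\HH^1 \subset \HH^n$, which is isometric to real hyperbolic 4-space $H^4_{\RR}$. Taking $L_0=\O_D^2$, the arithmetic sublattice $\Gamma_0 = \G_0^{L_0}(\O_k)$ sits inside $\Gamma$ via the natural block embedding, so $(\Gamma_0)_{\mathfrak{p}} \subset \Gamma_{\mathfrak{p}}$ for every ideal $\mathfrak{p}$, and translation lengths inside $\HH^1$ coincide with those measured in $\HH^n$.

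Applying the D\'oria--Murillo optimality result from the Appendix of~\cite{Murillo} (in the case $n=4$) to the arithmetic real hyperbolic 4-manifold $(\Gamma_0)_{\mathfrak{p}}\backslash\HH^1$ then provides a hyperbolic element $A_{\mathfrak{p}}\in(\Gamma_0)_{\mathfrak{p}}\setminus\{\pm I\}$ satisfying $\ell_{A_{\mathfrak{p}}}\le \tfrac{8}{4\cdot 5}\log\vol((\Gamma_0)_{\mathfrak{p}}\backslash\HH^1) + c$. Since $\dim\Sp(1,1) = (1+1)(2\cdot 1+3) = 10$, Lemma~\ref{lem:index-hyperspecial} applied to $\G_0$ gives $\log\vol((\Gamma_0)_{\mathfrak{p}}\backslash\HH^1) = 10\log\N(\mathfrak{p})+O(1)$, hence $\ell_{A_{\mathfrak{p}}}\le 4\log\N(\mathfrak{p}) + c'$, which completes the argument.

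The main obstacle is justifying that the D\'oria--Murillo result actually applies to $\Gamma_0$, which is a priori defined through a hermitian form over the quaternion algebra $D$ rather than through a quadratic form over $k$. One route is to invoke the accidental isomorphism of algebraic groups $\mathrm{C}_2 = \mathrm{B}_2$ and identify $\G_0$ with the spin group of a suitable $5$-dimensional quadratic form over $k$ (built from $D$ and the coefficient $a$), placing $\Gamma_0$ inside the first-type framework of~\cite{Murillo}. In line with the phrase \emph{generalize the argument of D\'oria and Murillo} in the introduction, one can equivalently replicate their trace-and-Minkowski construction directly in $\G_0(\O_k)$: Minkowski's theorem on the lattice $\mathfrak{p}^2\subset\O_k$ produces a nonzero $\beta\in\mathfrak{p}^2$ with $|\sigma(\beta)|=O(1)$ at each nontrivial embedding $\sigma:k\to\RR$ and $|\beta|=O(\N(\mathfrak{p})^2)$ at the trivial place, and this $\beta$ is then realized as the trace deficit $2a(\R(\tr A_{\mathfrak{p}}) - (n+1))$ of a Pell-type integer solution in $\G_0(\O_k)$, with the translation length estimate following from Corollary~\ref{lenghttrace} and Proposition~\ref{keytheorem}.
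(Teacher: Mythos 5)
Your proposal is a genuinely different route from the paper's for $n>1$, although the final numerics must (and do) agree. The paper restricts $h_a$ to the $k$-subspace $W = \bigoplus_i k e_i \subset V = D^{n+1}$, obtaining a quadratic form $q$ over $k$, a $k$-embedding $\mathbf{SO}(W,q)\hookrightarrow\U(V,h_a)$, and hence a totally geodesic $\textbf{H}_{\RR}^n \subset \HH^n$; it then applies D\'oria--Murillo's Theorem~A.1 directly to the first-type lattice $\Gamma'$ stabilizing $\O_k^{n+1}$, for which admissibility of $q$ in the sense of \cite[Sec.~2.3]{Murillo} is immediate. You instead restrict $h_a$ to the rank-$2$ $D$-submodule $\O_D e_0 \oplus \O_D e_1$, obtaining a totally geodesic $\HH^1 \subset \HH^n$. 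The two computations match: the paper gets $\sys_1(T_\p) \le \frac{8}{n(n+1)} \cdot \frac{n(n+1)}{2}\log\N(\p) + O(1) = 4\log\N(\p) + O(1)$, while you get $\frac{8}{4\cdot5}\cdot 10\log\N(\p)+O(1) = 4\log\N(\p)+O(1)$. The paper's choice is cleaner for $n>1$ precisely because it sidesteps the issue you correctly flag: your $\Gamma_0 = \G_0^{L_0}(\O_k)$ is defined over the quaternion algebra, and invoking D\'oria--Murillo requires transferring through the exceptional isomorphism $\mathrm{C}_2 = \mathrm{B}_2$ to a $5$-dimensional quadratic space over $k$, matching lattices, congruence subgroups, and the admissibility condition of \cite[Sec.~2.3]{Murillo} along the way. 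That transfer is not carried out in your proposal (nor is the ``Pell-type'' fallback spelled out), so there remains a nontrivial gap; notably, the paper itself implicitly relies on exactly the same transfer for its $n=1$ case, which it dispatches with the one-line remark that D\'oria--Murillo ``proves in particular the case $n=1$.'' So your idea is sound and would, if the $\mathrm{B}_2=\mathrm{C}_2$ bookkeeping were completed, give a uniform treatment of all $n\ge 1$; the paper's choice of $\textbf{H}_{\RR}^n \subset \HH^n$ simply buys a shorter verification for $n>1$ at the cost of a separate (unelaborated) $n=1$ case.
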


\begin{proof}
Let $\G = \U(V,h_a)$ and set $\Gamma = \G^L(\O_k)$ (see Sections
\ref{sub:admissible_groups}--\ref{sub:a_group_scheme_structure}).
The idea  is to construct arithmetic real hyperbolic manifolds which are totally geodesic
submanifolds in $\Gamma_\p\backslash\HH^n$, and to apply \cite[Thm. A.1]{Murillo}. The latter proves
in particular the case $n=1$, so that we will assume $n>1$ hereafter. 

We denote by $\{e_{0}, \ldots, e_{n}\}$ the standard basis of $V = D^{n+1}$; recall that $D$
is a quaternion algebra over $k$.
Let $W$ be the $k$-vector space generated by $\{e_{0}, \ldots, e_{n}\}$, and $L^{'} \subset W$ be the
$\O_k$-lattice with the same basis. 
Consider the quadratic form on $W$ given by
$$q(x,y)=-ax_{0}y_{0}+\sum_{i=1}^{n}x_{i}y_{i}.$$
This is the restriction to $W$ of the Hermitian form $h_a$; it is admissible in the sense of \cite[Sec. 2.3]{Murillo}.
We consider the  $k$-group $\textbf{SO}(W,q)$ and its
simply connected cover $\textbf{Spin}(W,q)$. The Lie group
$\textbf{Spin}(W,q)(\mathbb{R})/\lbrace{\pm I\rbrace }$ is isomorphic to the orientation preserving
isometry group of the real hyperbolic $n$-space $\textbf{H}_{\mathbb{R}}^{n}$.  
Since the basis $\{e_{0}, \ldots, e_{n}\}$ is common to  $V$ and $W$ we have an inclusion
$\textbf{SO}(W,q)\subset \textbf{U}(V,h_a)$,  and composing with the isogeny  we obtain a homomorphism
$\textbf{Spin}(W,q)\rightarrow\G$ defined over $k$.

Let $\Gamma' \subset \mathbf{Spin}(W,q)(k)$ be the stabilizer of $L'$. Since $k \neq \Q$ it is a cocompact
arithmetic lattice in $\Spin(n,1)$. The map  $\textbf{Spin}(W,q)\rightarrow\G$ induces a map
$\Gamma' \to \Gamma$, and similarly $\Gamma'_I \to \Gamma_I$ for any ideal   $I\subset\mathcal{O}_{k}$.
This induces a totally geodesic embedding $$T_{I}\hookrightarrow M_{I},$$ where
$T_{I}=\Gamma'_{I}\backslash\textbf{H}^{n}_{\mathbb{R}}$. Therefore
$$\sys_{1}(M_{I})\leq\sys_{1}(T_{I}).$$

From now on, we will assume that $I=\mathfrak{p}$ is a prime ideal. By \cite[Theorem~A.1]{Murillo}
(see also \cite[Theorem~B]{Murillothesis})
there exists a constant $d$ independent of $\mathfrak{p}$ such that
\begin{equation}\label{systolesubmanifold}
\sys_{1}(T_{\mathfrak{p}})\leq\frac{8}{n(n+1)}\log(\vol(T_{\mathfrak{p}}))+d. 
\end{equation}
Following the argument as in \cite[Theorem~B]{Murillothesis}, there exist constants $a_{1}$ and $a_{2}$ such that 
\begin{equation}\label{firstorder}
a_{1}\leq\frac{\vol(T_{\mathfrak{p}})}{\N(\mathfrak{p})^{\frac{n(n+1)}{2}}}\leq a_{2}.  
\end{equation}
For $\mathfrak{p}$ of norm large enough we have that  $\textbf{G}^{L}(\o_\p)$ is parahoric hyperspecial. 
By Lemmas \ref{lem:index-str-ap} and \ref{lem:index-hyperspecial} we obtain 
$$[\Gamma:\Gamma_\mathfrak{p}]=\N(\mathfrak{p})^{(n+1)(2n+3)}\prod_{j=1}^{n+1}\left(1-\frac{1}{\N(\mathfrak{p})^{2j}}\right).$$
Since $\vol(M_\mathfrak{p})=\vol(M)[\Gamma:\Gamma_\mathfrak{p}]$ there exist positive constants $b_{1}$ and $b_{2}$ such that 
\begin{equation}\label{secondorder}
    b_{1}\leq\frac{\vol(M_{\mathfrak{p}})}{\N(\mathfrak{p})^{(n+1)(2n+3)}}\leq b_{2}.
\end{equation}

By plugging the right-hand side of \eqref{firstorder} in \eqref{systolesubmanifold}, and using the left-hand side of \eqref{secondorder} afterwards, we conclude that 
\begin{align*}
        \sys_1(M_\p) &\le \sys(T_\p)\\
        &\leq\frac{4}{(n+1)(2n+3)}\log(\vol(M_{\mathfrak{p}}))+d',
\end{align*}
for some constant $d'$ independent of $\mathfrak{p}$. 
\end{proof}


\bibliographystyle{plain}

\end{document}